\newcommand\reallywidehat[1]{%
\savestack{\tmpbox}{\stretchto{%
  \scaleto{%
    \scalerel*[\widthof{\ensuremath{#1}}]{\kern-.6pt\bigwedge\kern-.6pt}%
    {\rule[-\textheight/2]{1ex}{\textheight}}
  }{\textheight}%
}{0.5ex}}%
\stackon[1pt]{#1}{\tmpbox}%
}
\definecolor{myred}{rgb}{0.75,0,0}
\definecolor{mygreen}{rgb}{0,0.5,0}
\definecolor{myblue}{rgb}{0,0,0.65}
\theoremstyle{plain}
\newtheorem{theorem}[subsection]{Theorem}
\newtheorem{proposition}[subsection]{Proposition}
\newtheorem{lemma}[subsection]{Lemma}
\newtheorem{corollary}[subsection]{Corollary}
\theoremstyle{definition}
\newtheorem{definition}[subsection]{Definition}
\newtheorem{remark}[subsection]{Remark}
\newtheorem{conjecture}[subsection]{Conjecture}
\theoremstyle{remark}
\newtheorem{notation}[subsection]{Notation}
\numberwithin{equation}{section}
\newcommand\nc{\newcommand}
\nc\on{\operatorname}
\nc\renc{\renewcommand}
\newcommand*{\shom}{\mathscr{H}\kern -.5pt om}
\newcommand*{\stor}{\mathscr{T}\kern -.5pt or}
\newcommand*{\sext}{\mathscr{E}\kern -.5pt xt}
\providecommand\@dotsep{5}
\renewcommand{\listoftodos}[1][\@todonotes@todolistname]{%
\@starttoc{tdo}{#1}}
\newcommand{\customlabel}[2]{\protected@write \@auxout {}{\string \newlabel {#1}{{#2}{\thepage}{#2}{#1}{}} }\hypertarget{#1}{#2}}
\renewcommand\hom{\mathrm{Hom}}
\DeclareMathOperator\rk{rk}
\DeclareMathOperator\Mod{Mod}
\DeclareFontFamily{U}{wncy}{}
\DeclareFontShape{U}{wncy}{m}{n}{<->wncyr10}{}
\DeclareSymbolFont{mcy}{U}{wncy}{m}{n}
\DeclareMathSymbol{\Sha}{\mathord}{mcy}{"58}
\def\listtodoname{List of Todos}
\def\listoftodos{\@starttoc{tdo}\listtodoname}
\title[Applications around the Putman-Wieland conjecture]{Applications of the algebraic geometry of the Putman-Wieland conjecture}
\author{Aaron Landesman and Daniel Litt}
\subjclass[2020]{
57K20  
14D07
14H10
14H30
14H40
14H60
}
\begin{document}

\begin{abstract}
We give two applications of our prior work toward the Putman-Wieland
conjecture.
First,
we deduce a strengthening of a result of 
Markovi\'c-To\v{s}i\'c on virtual mapping class group actions on the homology of covers. 
Second, let $g\geq 2$ and let $\Sigma_{g',n'}\to \Sigma_{g, n}$ 
be a finite $H$-cover of topological surfaces. 
We show the virtual action of the mapping class group of $\Sigma_{g,n+1}$ on an $H$-isotypic component of $H^1(\Sigma_{g'})$ has non-unitary image. 
\end{abstract}

\maketitle

\section{Introduction}
\subsection{Review of the Putman-Wieland conjecture}
\label{subsection:putman-wieland-statement}

We aim to explain applications of our prior work 
\cite{landesmanL:canonical-representations}
toward the Putman-Wieland conjecture, and so we begin by reviewing the statement
of the Putman-Wieland conjecture.

Let $\Sigma_{g,n}$ denote an orientable topological surface of genus $g$
with $n$ punctures. Let $H$ be a finite group.
Given a finite unramified $H$-cover of  topological surfaces $\Sigma_{g',n'} \to \Sigma_{g,n}$,
there is an action of a finite index subgroup $\Gamma$ of the mapping class group $\Mod_{g,n+1}$ of $\Sigma_{g,n+1}$ on
$H_1(\Sigma_{g'}, \mathbb C)$, as we now explain.
The group $\on{Mod}_{g,n+1}$ acts on $\pi_1(\Sigma_{g,n},x)$ for some basepoint $x$, and we can take $\Gamma$ to be the stabilizer of the surjection
$\phi: \pi_1(\Sigma_{g,n},x) \twoheadrightarrow H$, where 
$\phi$ corresponds to the cover $\Sigma_{g',n'} \to \Sigma_{g,n}$.
Then, for $x' \in \Sigma_{g',n'}$ mapping to $x$, $\Gamma$ acts on
$\pi_1(\Sigma_{g',n'},x')=\ker \phi$, preserving the conjugacy classes of the loops around the punctures in $\Sigma_{g', n'}$, and hence acts on $H_1(\Sigma_{g'},\mathbb C)$.

\begin{conjecture}[Putman-Wieland, \protect{\cite[Conjecture 1.2]{putmanW:abelian-quotients}}]
\label{conjecture:putman-wieland-intro}
Fix $g \geq 2$ and $n \geq 0$.
For any unramified cover 
$\Sigma_{g',n'} \to \Sigma_{g,n}$,
the vector space $H_1(\Sigma_{g'}, \mathbb C)$ has no 
nonzero vectors with finite orbit under the action of $\Gamma$.
\end{conjecture}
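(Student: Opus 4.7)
The plan is to translate \autoref{conjecture:putman-wieland-intro} into a statement about variations of Hodge structure, and then attack it via Higgs-bundle methods. Let $\mathcal{M}_\phi \to \mathcal{M}_{g,[n+1]}$ be the finite \'etale cover of the moduli stack of $(n+1)$-pointed genus-$g$ curves whose topological fundamental group at the distinguished basepoint is $\Gamma$; points of $\mathcal{M}_\phi$ parameterize such curves together with an $H$-cover of the complement of the first $n$ markings determined by $\phi$. Over $\mathcal{M}_\phi$ sits a universal family $\pi \colon \mathcal{C}' \to \mathcal{M}_\phi$ of smooth $H$-covers, and the local system $\mathbb{V} := R^1\pi_*\bc$ is a polarizable, $H$-equivariant $\bq$-variation of Hodge structure whose fiber at the basepoint is $H^1(\Sigma_{g'},\bc)$. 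Under this dictionary, the conjecture becomes the assertion that $\mathbb{V}$ admits no nonzero flat global section on any connected finite \'etale cover of $\mathcal{M}_\phi$.

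The first step is a standard semisimplicity reduction: a finite-orbit vector generates a finite-dimensional $\Gamma$-subrepresentation on which a finite-index subgroup acts trivially, and passing to the corresponding further finite cover of $\mathcal{M}_\phi$ produces an honest flat section. By Deligne's theorem of the fixed part, the space of flat sections is itself a sub-Hodge structure of $H^1(\Sigma_{g'},\bc)$ of weight one, and hence corresponds to an isotrivial direct summand of $\mathbb{V}$. In particular, the associated Higgs field $\theta \colon E^{1,0} \to E^{0,1} \otimes \Omega^1_{\mathcal{M}_\phi}$, with $E^{1,0} = \pi_*\omega_{\mathcal{C}'/\mathcal{M}_\phi}$, restricts to zero on this sub-VHS. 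Thus it suffices to prove that $\theta$ has no kernel when restricted to any $H$-equivariant sub-local-system of $\mathbb{V}$.

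The central calculation is then to evaluate $\theta$ along the tangent direction coming from the point-pushing map $\Sigma_{g,n+1} \to \Sigma_{g,n}$---that is, from varying the distinguished marked point along the curve. Along this direction, Kodaira-Spencer theory identifies the restriction of $\theta$ with an evaluation-at-a-fiber map on sections of the relative dualizing sheaf of $\mathcal{C}'$, which decomposes $H$-isotypically and whose ranks can be computed via Chevalley-Weil. The main obstacle is precisely this injectivity: for certain small irreducible representations $\rho$ of $H$, either the $\rho$-isotypic piece of $H^0(C',\omega_{C'})$ is trivial, or the evaluation map on it is forced to vanish on a nontrivial summand, so the point-pushing direction alone is insufficient. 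To handle these cases one would bring in additional deformation directions, in particular those arising from smoothings of nodal $H$-covers as developed in \cite{landesmanL:canonical-representations}, and combine the resulting constraints using the $H$-equivariant geometry of the universal Prym. Controlling all representations of $H$ uniformly in $g$ and in the cover is where the conjecture remains open, and is the fundamental difficulty of this approach.
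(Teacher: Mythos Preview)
The statement you are attempting to prove is a \emph{conjecture}, and the paper does not prove it; indeed, the paper explicitly notes immediately after stating it that the conjecture is \emph{false} when $g=2$ by Markovi\'c's counterexample \cite{markovic}. So there is no proof in the paper to compare your proposal against.

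Your proposal is not a proof either, and to your credit you say so: you outline the Hodge-theoretic/Higgs-bundle strategy, correctly invoke the theorem of the fixed part to reduce to showing that the derivative of the period map has no kernel on any $H$-isotypic piece, and then explicitly identify the place where the argument breaks down (controlling all irreducible $\rho$ uniformly). This is an accurate summary of the state of affairs and is essentially the framework used in \cite{landesmanL:canonical-representations} and in the present paper. But as a proof of \autoref{conjecture:putman-wieland-intro} it has a genuine and acknowledged gap---the injectivity you need simply fails in general, as the $g=2$ counterexample shows. In particular, your suggestion that ``additional deformation directions'' from nodal degenerations might close the gap cannot succeed as stated, since any such argument would have to also rule out the $g=2$ case, which is impossible.
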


Note that the Putman-Wieland conjecture is false when $g=2$, see
\cite{markovic}.
We next describe applications of our prior work relating to the
Putman-Wieland conjecture.

\subsection{The results of Markovi\'c-To\v{s}i\'c}
\label{subsection:markovic-tosic}

In \cite[Theorem 1.5]{markovic2}, Markovi\'c-To\v{s}i\'c verified some new cases
of the Putman-Wieland conjecture.
We are able to recover their results from ours. In fact, we are able to deduce a
slight generalization of their results, as explained in
\autoref{remark:slight-generalization}.

We now formally define what it means for $f: X \to Y$ to furnish a
counterexample to Putman-Wieland.
\begin{definition}
	\label{definition:pw-counterexample}
	Suppose $h: \Sigma_{g',n'} \to \Sigma_{g,n}$
is a finite covering of topological surfaces.
Let $\Gamma \subset \on{Mod}_{g,n+1}$
denote the finite index subgroup preserving $h$.
The action of $\Gamma$ on $\pi_1(\Sigma_{g',n'}, \mathbb C)$ induces an action on
$H_1(\Sigma_{g',n'}, \mathbb C)$ which preserves the subspace spanned by homology classes
of loops around punctures, and hence also induces an action on
$H_1(\Sigma_{g'}, \mathbb C) \simeq H^1(\Sigma_{g'}, \mathbb C)$, via Poincar\'e duality.
We say $h$ {\em furnishes a counterexample to Putman-Wieland}, if
there is some nonzero $v \in H^1(\Sigma_{g'}, \mathbb C)$ with
finite orbit under $\Gamma$.

Let $Y$ be a compact Riemann surface of genus $g$ with $n$ marked points $p_1,
\ldots, p_n$.
Upon identifying $\pi_1(Y-\{p_1, \ldots, p_n\}) \simeq \pi_1(\Sigma_{g,n})$,
let $f: X \to Y$ be the
covering of compact Riemann surfaces, ramified at most over $p_1, \ldots, p_n$,
corresponding to the topological cover $h$.
If $h$ furnishes a counterexample to Putman-Wieland, we also say $f: X \to Y$ furnishes a counterexample to
Putman-Wieland.

Suppose $h$ is Galois with Galois group $H$, and $\rho$ is an irreducible
$H$-representation.
Note that $H$ and $\Gamma$ simultaneously act on $H^1(\Sigma_{g'}, \mathbb C)$.
We say a counterexample to Putman-Wieland is {\em $\rho$-isotypic} 
if every element of the the $\rho$-isotypic subspace $H^1(\Sigma_{g'}, \mathbb C)^\rho$ has finite orbit under $\Gamma$. Here if $V$ is an $H$-representation, $V^\rho$ denotes the $\rho$-isotypic subspace of $V$, i.e.~the image of the natural evaluation map $$\hom_H(\rho, V)\otimes \rho\to V.$$
\end{definition}
\begin{remark}
\label{remark:}
Even though the Putman-Wieland conjecture assumes $g \geq 2$, we still say $f: X
\to Y$ furnishes a counterexample to Putman-Wieland when $Y$ has genus $g \leq 1$.
\end{remark}

Here is the slight improvement on \cite[Theorem 1.5]{markovic2}, which we will
prove in \autoref{proof-mt}. We let $\lambda_1(X)$ denote the smallest nonzero eigenvalue of the Laplacian acting on
$L^2$ functions on
$X$.
\begin{theorem}
	\label{theorem:markovicT}
	Suppose $Y$ has genus $g \geq 2$ and $f: X \to Y$
	furnishes a counterexample to Putman-Wieland. Then 
$\frac{1}{g-1} \geq 2 \lambda_1(X)$.
If $f$ is Galois, then 
$\frac{1}{g-1} > 2 \lambda_1(X)$.
\end{theorem}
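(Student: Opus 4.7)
I plan to combine the algebro-geometric characterization of Putman--Wieland counterexamples from our prior paper \cite{landesmanL:canonical-representations} with a Rayleigh quotient estimate for $\lambda_1(X)$. The main result of \cite{landesmanL:canonical-representations} shows that a PW counterexample $f: X \to Y$ yields a nonzero sub-variation of Hodge structure of the natural VHS on the Hurwitz space parametrizing such covers, and that this sub-VHS is isotrivial (has finite monodromy). Via Simpson's correspondence, this translates to a sub-Higgs bundle of the canonical Higgs bundle on which the Higgs field vanishes. Taking a single fiber yields a nonzero holomorphic $1$-form $\omega \in H^0(X, \Omega^1_X)$ such that, for every infinitesimal deformation $\mu \in H^1(Y, T_Y(-D))$ of the cover (where $D$ is the branch divisor), the cup product $\mu \smile \omega$ vanishes in $H^1(X, \mathcal O_X)$.

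Next, I would translate this Kodaira--Spencer vanishing into an analytic bound. By Serre duality, the vanishing of $\mu \smile \omega$ is equivalent to the vanishing of certain integrals over $X$ coupling $\omega$ against arbitrary quadratic differentials on $Y$. Using $u := |\omega|^2_h$ (for $h$ the pullback to $X$ of the hyperbolic metric on $Y$) as a test function, and applying Gauss--Bonnet on $Y$ to produce the factor of $g - 1$ (from $\int_Y c_1(K_Y) = 2g-2$), a Bochner-type computation should yield
\[
\int_X |\nabla (u - \bar u)|^2_h \;\leq\; \tfrac{1}{g-1} \int_X (u - \bar u)^2 \, dV_h.
\]
By the variational characterization of $\lambda_1(X)$ this gives $2 \lambda_1(X) \leq \tfrac{1}{g-1}$.

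The main obstacle is pinning down the sharp constant $\tfrac{1}{g-1}$: carefully converting the algebraic Kodaira--Spencer vanishing into an analytic identity on $X$ requires matching normalizations between the Higgs bundle formalism on $Y$ and Laplacian analysis on $X$. For the strict inequality when $f$ is Galois, equality would force $u - \bar u$ to be a genuine $\lambda_1(X)$-eigenfunction, and $H$-equivariance of the construction would then force $u$ (and hence $\omega$ itself, after further analysis) to descend from $Y$. But pulled-back holomorphic forms cannot produce a PW counterexample, giving a contradiction.
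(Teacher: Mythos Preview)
Your approach is genuinely different from the paper's, and its central step is a real gap rather than a normalization issue. The paper never attempts a direct Rayleigh-quotient or Bochner estimate. Instead it (i) shows that a PW counterexample forces $(E^\rho)^\vee\otimes\omega_Y$ to fail generic global generation for some irreducible $\rho$, which is reinterpreted as $h^0(X,\mathscr O_X(f^{-1}(p)))\geq 2$ for general $p\in Y$ (and $\geq g+1$ when $f$ is Galois, via a rank bound $\dim\rho\geq g$); this yields $\on{gon}(X)\leq \deg f$, respectively $\on{gon}(X)\leq \deg f-(g-1)$; and then (ii) plugs this into the Li--Yau inequality $\on{gon}(X)\geq 2\lambda_1(X)(g'-1)$ together with Riemann--Hurwitz $g'-1\geq (\deg f)(g-1)$ and divides by $\deg f\,(g-1)$. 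So the spectral content is entirely outsourced to Li--Yau; the factor $g-1$ comes from Riemann--Hurwitz, not from Gauss--Bonnet on $Y$, and the strict Galois inequality comes from the extra $-(g-1)$ in the gonality bound, not from an eigenfunction rigidity argument.

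In your plan, by contrast, the crux is the claimed inequality $\int_X|\nabla(u-\bar u)|^2\leq \tfrac{1}{g-1}\int_X(u-\bar u)^2$ for $u=|\omega|^2_h$, and you acknowledge this as ``the main obstacle.'' I do not see how to get there: the Kodaira--Spencer vanishing says $\omega$ pairs to zero against pullbacks of elements of $H^0(Y,\omega_Y^{\otimes 2}(D))$, which is first-order linear information in $\omega$, whereas $\int_X|\nabla|\omega|^2|^2$ is quartic and second-order in $\omega$ and has no evident relation to that space; no Bochner identity is written that would bridge the two, let alone with the sharp constant $\tfrac{1}{g-1}$. There is also a metric mismatch: $\lambda_1(X)$ is taken for the hyperbolic metric on $X$, while your $h$ is the pullback of the hyperbolic metric on $Y$, and these differ whenever $f$ is ramified. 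Finally, your strictness argument in the Galois case is incomplete: $u=|\omega|^2$ is automatically $H$-invariant even when $\omega$ lies in a nontrivial one-dimensional character of $H$, so $H$-equivariance of $u$ does not force $\omega$ to descend to $Y$.
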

\begin{remark}
	\label{remark:slight-generalization}
	\autoref{theorem:markovicT} is slightly stronger than
\cite[Theorem 1.5]{markovic2}
as we now explain.
In their result, they first choose $\varepsilon$ and then choose $g$ subject to
the inequality 
$g \geq (1+\epsilon)/\epsilon$, or equivalently $\varepsilon \geq
\frac{1}{g-1}$.
They conclude $\varepsilon \geq \lambda_1(X)$. (Though their argument in fact
gives
$\varepsilon \geq 2\lambda_1(X)$, as they accidentally omitted a factor of $2$.)
In general, we also find this same inequality. However, when the cover is moreover Galois, 
using
\autoref{theorem:markovicT}, we find
$\varepsilon \geq \frac{1}{g-1}
> 2 \lambda_1(X)$, 
so we obtain a strict inequality instead of a weak inequality.
\end{remark}

Recall that the gonality of a curve $X$ is the smallest degree of a non-constant map $f: X
\to \mathbb P^1$. We use $\on{gon}(X)$ to denote the gonality of $X$.
In order to prove \autoref{theorem:markovicT}, the first step is the same as in
\cite{markovic2}, as we reduce to a statement about the gonality of $X$ using the Li-Yau
inequality and then apply the following proposition, which may be of independent
interest.

\begin{proposition}
	\label{proposition:gonality}
Suppose $Y$ has genus $g$ and
$f: X \to Y$ is a cover furnishing a counterexample to Putman-Wieland. Then,
$\on{gon}(X) \leq \deg f$.
When $f$ is additionally Galois, we have $\on{gon}(X) \leq \deg f - (g-1)$.
\end{proposition}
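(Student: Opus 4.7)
My plan is to combine the main theorem of our prior work \cite{landesmanL:canonical-representations} with the $(f^*, f_*)$-adjunction, translating a Putman-Wieland counterexample into a low-degree pencil on $X$. Applying \cite{landesmanL:canonical-representations} to the counterexample $f\colon X \to Y$ should yield a nonzero sub-Higgs bundle $\mathcal{W}$ of the natural Higgs bundle $\mathcal{H}$ associated to the $H^1$-variation as $(Y; p_1,\dots,p_{n+1})$ moves in moduli, on which the Kodaira--Spencer Higgs field vanishes---the flatness reflecting that the underlying local system has finite monodromy, coming from the finite-orbit vector guaranteed by \autoref{definition:pw-counterexample}. The Hodge grading of $\mathcal{W}$ gives in particular an inclusion $\mathcal{W}^{0,1} \hookrightarrow f_*\mathcal{O}_X$.

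The next step is to turn $\mathcal{W}^{0,1}$ into a pencil. Adjunction gives a nonzero morphism $f^*\mathcal{W}^{0,1} \to \mathcal{O}_X$; combined with the flat connection on $\mathcal{W}$ (from finite monodromy), this should propagate into a two-dimensional family of sections of a line bundle on $X$. A rank/degree count, using $\rk f_*\mathcal{O}_X = \deg f$, produces the bound $\on{gon}(X) \le \deg f$. For the Galois refinement, $H$-equivariance splits $f_*\mathcal{O}_X = \mathcal{O}_Y \oplus (f_*\mathcal{O}_X)_{\mathrm{nontriv}}$, and a nontrivially-isotypic counterexample places $\mathcal{W}$ inside the rank-$(\deg f - 1)$ nontrivial summand; feeding this into the rank/degree count, together with the $g$-dimensional $H^0(Y, \omega_Y)$ contribution coming via Serre duality from the $(1,0)$-side of $\mathcal{W}$, should yield the sharpening by $g-1$.

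The main obstacle I anticipate is extracting a genuine pencil---a two-dimensional linear system---rather than merely an effective divisor from the sub-Higgs bundle $\mathcal{W}$. Adjunction alone only produces a single section; the essential use of the flat connection (i.e., the $\Gamma$-equivariance of the finite-monodromy structure) to spread this into a moving family, together with the tight rank/degree accounting required to land at exactly $\deg f$ (and at $\deg f - (g-1)$ in the Galois case), are the crucial geometric inputs.
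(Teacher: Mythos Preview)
Your plan has a genuine gap at precisely the point you flag as an obstacle, and the mechanism you propose for overcoming it does not work. The flat connection you invoke lives on the bundle $\mathcal{W}$ over the \emph{base} $\mathscr{M}$ of the versal family; it tells you that global sections of $\mathcal{W}$ are locally constant as the moduli point varies, but it does not manufacture a second section of any line bundle on a fixed fiber $X$. Likewise, adjunction $f^*\mathcal{W}^{0,1}\to\mathscr{O}_X$ produces at best a single map, not a pencil. The phrase ``a rank/degree count, using $\rk f_*\mathscr{O}_X=\deg f$, produces the bound'' is not an argument: nothing in your outline identifies a line bundle of degree $\leq\deg f$ with $h^0\geq 2$. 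In the Galois case, the proposed use of ``the $g$-dimensional $H^0(Y,\omega_Y)$ contribution via Serre duality from the $(1,0)$-side'' is similarly undeveloped; there is no indication of how this produces the sharpening by $g-1$.

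The paper's route is concrete and different in spirit. The key observation is that the counterexample forces $(E^\rho)^\vee\otimes\omega_Y$ to fail generic global generation for some irreducible $\rho$ (via the identification of $\overline{\nabla}^\rho_m$ with a multiplication map). A short Riemann--Roch/Serre duality argument then shows this is equivalent to $h^0(Y,E^\rho(p))>0$ for general $p$. Since $E^\rho$ is a summand of $f_*\mathscr{O}_X$, the projection formula gives $h^0(X,\mathscr{O}_X(f^{-1}(p)))\geq 2$: the pencil is simply the fiber $f^{-1}(p)$ moving. This already gives $\on{gon}(X)\leq\deg f$. For the Galois refinement, one applies a Clifford-type bound for parabolic bundles (\autoref{proposition:generic-parabolic-global-generation}) to deduce $\dim\rho\geq g$, whence $h^0(X,\mathscr{O}_X(f^{-1}(p)))\geq g+1$; subtracting $g-1$ generic points leaves a pencil of degree $\deg f-(g-1)$. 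The pencil thus comes from the special geometry of the fiber divisor on $X$, not from parallel transport over moduli.
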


We prove this in \autoref{proof:gonality}.
The non-Galois case of the above proposition is proven via different
methods in \cite{markovic2}. 

\subsection{Prill's problem}
\label{subsection:intro-prill}

Along the way to our proof of \autoref{theorem:markovicT}, we make a brief digression regarding
Prill's problem \cite[Chapter VI, Exercise D]{ACGH:I} in \autoref{remark:prill-solution}. 
There, we explain why
a general genus $2$ curve has a cover such that every fiber moves in a pencil.
As detailed in \autoref{remark:companion-paper},
we prove a stronger result in the shorter and less technical article \cite{landesmanL:prill},
but include these observations here as it is a simple consequence of the tools
we develop to recover the results of Markovi\'c-To\v{s}i\'c described above.

\subsection{Isotypicity}
\label{subsection:isotypic}

We next prove a result ruling out $\rho$-isotypic counterexamples to
Putman-Wieland in genus at least $2$.
\begin{theorem}
	\label{theorem:isotypic}
Suppose $f: X\to Y$ is a Galois $H$-cover furnishing a counterexample to 
Putman-Wieland which is $\rho$-isotypic in the sense of
\autoref{definition:pw-counterexample}. 
Then $Y$ has genus at most $1$. 

Equivalently, if the genus of $Y$ is at least $2$, for each irreducible $H$-representation $\rho$, there exists an element of $H^1(X, \mathbb{C})^\rho$ with infinite orbit under the virtual action of the mapping class group of $Y$.
\end{theorem}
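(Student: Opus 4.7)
The plan is to assume for contradiction that $Y$ has genus $g\geq 2$ and that $f\colon X\to Y$ is a Galois $H$-cover furnishing a $\rho$-isotypic counterexample to Putman-Wieland, and to derive a contradiction by showing that the $\rho$-isotypic Hodge bundle, over a suitable moduli of $H$-covers, cannot admit a non-zero flat sub-bundle when $g\geq 2$. First, since $V := H^1(X,\bc)^\rho$ is finite-dimensional and every $v\in V$ has finite $\Gamma$-orbit by the isotypic hypothesis, passing to a finite-index subgroup $\Gamma'\subset \Gamma$ I may assume $\Gamma'$ acts trivially on all of $V$.

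Next, I would spread $(Y,f)$ out in a family over a finite \'etale cover $\widetilde{\scm}\to \scm_{g,n+1}$ along which the $H$-cover extends. The $\rho$-isotypic subspaces of the fibers $H^1(X_s, \bc)$ assemble into a local system $\mathcal V^\rho$ on $\widetilde{\scm}$, and the triviality of the $\Gamma'$-action on $V$ says $\mathcal V^\rho$ has finite monodromy; after further finite \'etale pullback one may arrange $\mathcal V^\rho$ to be trivial as a local system. Since $\mathcal V^\rho$ underlies a polarized weight-$1$ variation of Hodge structure, the Hodge sub-bundle $F^1 \mathcal V^\rho$ corresponds to a flat holomorphic sub-bundle of the $\rho$-isotypic Hodge bundle $\mathcal E^\rho$ of the family; equivalently, $\mathcal E^\rho$ contains a non-zero trivial sub-bundle on $\widetilde{\scm}$.

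To conclude, one invokes the Hodge-theoretic positivity established in \cite{landesmanL:canonical-representations}: once $g\geq 2$, the $\rho$-isotypic piece $\mathcal E^\rho$ of the Hodge bundle over the moduli of $H$-covers admits no non-zero flat holomorphic sub-bundle. This contradicts the previous paragraph, completing the proof.

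I expect the final step to be the technical heart: one needs a uniform positivity statement for the $\rho$-isotypic Hodge bundle covering every irreducible representation $\rho$ when $g\geq 2$. For the trivial representation this reduces to the classical fact that $\on{Mod}_{g,n+1}$ has no non-zero finite orbits on $H^1(Y,\bc)$ for $g\geq 2$; the content for general $\rho$ should come from the Higgs-theoretic analysis of canonical representations carried out in \cite{landesmanL:canonical-representations}. A secondary subtlety is carefully checking that the natural $\Gamma$-action on $H^1(X,\bc)$ indeed matches the monodromy of the local system $\mathcal V^\rho$ on the moduli-theoretic base $\widetilde{\scm}$, so that the passage between a finite-orbit vector and finite monodromy is valid.
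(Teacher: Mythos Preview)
Your overall strategy is correct and matches the paper's approach: reduce the $\rho$-isotypic hypothesis to the vanishing of the derivative of the period map $\overline{\nabla}^\rho_m$ on the $\rho$-isotypic piece (equivalently, flatness of the Hodge sub-bundle of $\mathcal V^\rho$), and then argue this cannot happen for $g\geq 2$. Your first two paragraphs essentially reproduce the paper's \autoref{lemma:period-map-kernel} in the isotypic case.

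The genuine gap is in your final step. The statement you need---that for $g\geq 2$ the $\rho$-isotypic Hodge bundle admits no nonzero flat sub-bundle, for \emph{every} irreducible $\rho$---is \emph{not} in \cite{landesmanL:canonical-representations}. That paper proves positivity results under hypotheses of the form $g\gg\dim\rho$; it does not give the uniform statement you are invoking. The required result is new to the present paper: it is \autoref{theorem:isotypic-derivative}, which says that for a semistable parabolic bundle $E_\star$ of parabolic degree zero on a curve of genus $g\geq 2$, the bilinear pairing
\[
H^0(\widehat{E}_0\otimes\omega_C(D))\otimes H^0(E^\vee\otimes\omega_C)\to H^0(\omega_C^{\otimes 2}(D))
\]
cannot vanish. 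Via \cite[Theorem 5.1.6]{landesmanL:canonical-representations}, vanishing of this pairing is exactly the vanishing of $\overline{\nabla}^\rho_m$. The proof of \autoref{theorem:isotypic-derivative} is where the work lies: one lets $U,V$ be the globally generated subsheaves of the two factors, observes that $U\oplus V$ is isotropic for the quadratic form associated to the pairing (hence $\rk U+\rk V\leq \rk E$), and combines this with the Clifford-type bound of \autoref{proposition:generic-parabolic-global-generation} to force $g\leq 2$; the boundary case $g=2$ is then ruled out by analyzing the equality case of Clifford's theorem for vector bundles. You correctly identified this step as the technical heart, but it does not follow from the reference you cite.
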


This will follow from a stronger statement:
\begin{theorem}\label{theorem:non-unitary}
Let $X\to Y$ be a Galois $H$-cover, where $Y$ has genus at least $2$. Let $\rho$ be an irreducible complex $H$-representation. Then the virtual action of the mapping class group of $Y$ on $H^1(X, \mathbb{C})^\rho$ is not unitary.	
\end{theorem}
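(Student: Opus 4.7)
The plan is to argue by contradiction: unitarity of the monodromy will be translated, via non-abelian Hodge theory, into the vanishing of a Higgs field that was shown to be nonzero in our prior work.

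Assume that the action of the virtual mapping class group $\Gamma$ on $H^1(X,\mathbb{C})^\rho$ is unitary. We view this action as the monodromy of a polarized variation of Hodge structure $\mathcal{V}_\rho$ on a Hurwitz-type stack $\mathcal{H}$ parameterizing Galois $H$-covers of $(n+1)$-pointed genus $g$ curves, with fiber at $(Y', h'\colon X' \to Y')$ equal to $H^1(X', \mathbb{C})^\rho$. Since $\mathcal{H}$ is smooth quasi-projective and $\mathcal{V}_\rho$ is polarized, a standard consequence of Simpson's non-abelian Hodge correspondence (in its tame version for quasi-projective bases) is that a unitary monodromy representation forces the associated Higgs field to vanish identically. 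Equivalently, the Hodge decomposition $H^1(X)^\rho = H^{1,0}(X)^\rho \oplus H^{0,1}(X)^\rho$ is preserved by the Gauss--Manin connection as the cover varies.

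At a point of $\mathcal{H}$ corresponding to $h\colon X \to Y$ with divisor $D = p_1 + \cdots + p_{n+1}$, Griffiths transversality identifies this Higgs field with the Kodaira--Spencer cup product
\[
\theta\colon H^1(Y, T_Y(-D)) \otimes H^0(X, \omega_X)^\rho \to H^1(X, \mathcal{O}_X)^\rho,
\]
restricted to the $\rho$-isotypic piece. Our unitarity assumption therefore becomes the statement that $\theta \equiv 0$ for every such cover.

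The main ingredient is our prior work \cite{landesmanL:canonical-representations}, which establishes precisely the non-vanishing of this $\rho$-isotypic cup product for $g \geq 2$ and any irreducible $\rho$; this contradicts the conclusion drawn from unitarity and completes the proof. I expect the hard step to be not the formal reduction above, but rather this non-vanishing result, whose proof in the cited paper requires a detailed analysis of the canonical system on $X$ together with its $\rho$-isotypic decomposition, and a careful treatment of how deformations of the pointed base curve lift to deformations of the cover.
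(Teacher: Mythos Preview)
Your overall architecture matches the paper's: assume unitarity, deduce that the derivative of the period map vanishes identically on the $\rho$-isotypic piece, and derive a contradiction from a non-vanishing statement. For the first implication the paper appeals to Deligne's decomposition of a polarizable VHS into irreducibles (a unitary irreducible $\mathbb{C}$-VHS has Hodge filtration of length one, so $\overline{\nabla}^\rho_m=0$) rather than to Simpson's correspondence, but your route reaches the same conclusion.

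The gap is in the last step. The non-vanishing you invoke is not in \cite{landesmanL:canonical-representations}: what that paper supplies here is the identification of $\overline{\nabla}^\rho_m$ with the multiplication pairing (Theorem~5.1.6 there) and the rank-one case (Proposition~5.2.3 there). The general statement---that for $g\geq 2$ the pairing
\[
H^0\bigl(Y,\widehat{E}^\rho_0\otimes\omega_Y(D)\bigr)\otimes H^0\bigl(Y,(E^\rho_0)^\vee\otimes\omega_Y\bigr)\;\longrightarrow\; H^0\bigl(Y,\omega_Y^{\otimes 2}(D)\bigr)
\]
cannot vanish---is \autoref{theorem:isotypic-derivative} of the present paper, and its proof is the substantive new content of this section. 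That argument passes to the globally generated subsheaves $U,V$ of the two factors, shows that $U\oplus V$ is isotropic for the quadratic form attached to $B_E$ (forcing $\rk U+\rk V\leq\rk E$), and then bounds the coranks from below via the parabolic Clifford-type estimate \autoref{proposition:generic-parabolic-global-generation}, with a separate equality-case analysis at $g=2$. Your reduction is correct, but the result you propose to cite has to be proven, and you have not indicated how.
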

\autoref{theorem:isotypic} is immediate, as representations with finite image are unitary.

We prove \autoref{theorem:non-unitary} in \autoref{subsection:isotypic-proof}.
The idea is to investigate a certain bilinear pairing, arising from the derivative of a period map and studied earlier in \cite{landesmanL:canonical-representations}. 

A recent paper of Boggi-Looijenga \cite{boggiL:curves-with-prescribed-symmetry},
which has since been retracted by the authors,
claims to show that in some settings the Jacobian of the generic $H$-curve has
$\mathbb{Q}$-endomorphism algebra $\mathbb{Q}[H]$. While their argument is
incomplete, we show the claimed result would imply the 
Putman-Wieland conjecture in \autoref{corollary:boggi-looijenga}. 

In \cite{markovic}, Markovi\'{c} gives a counterexample to
Putman-Wieland $f: X \to Y$, where $Y$ has genus $2$.
\autoref{theorem:isotypic} immediately implies the following.
\begin{corollary}
	\label{corollary:no-isotypic}
	Let $f' : X' \to Y$ 
	be the Galois closure of the counterexample to Putman-Wieland $f: X \to
	Y$ from
	\cite{markovic}.
For any irreducible $H$ representation $\rho$, $f'$
	is not a
	$\rho$-isotypic counterexample to Putman-Wieland.
\end{corollary}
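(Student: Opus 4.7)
The plan is to deduce this immediately from \autoref{theorem:isotypic} applied to $f'$; the only genuine content is checking that the Galois closure $f'$ is itself a Galois counterexample to Putman-Wieland, after which the conclusion is forced by the fact that $Y$ has genus $2$.

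First I would verify that $f'$ remains a counterexample. Since $f$ is unramified, so is its Galois closure, and $f' \colon X' \to Y$ is an unramified Galois $H$-cover for $H = \on{Gal}(X'/Y)$. Write $\Gamma \subset \Mod_{g,n+1}$ (respectively $\Gamma' \subset \Mod_{g,n+1}$) for the finite index subgroup preserving $f$ (respectively $f'$), as in \autoref{subsection:putman-wieland-statement}. Because the surjection $\pi_1(\Sigma_{g,n}, x) \twoheadrightarrow H$ defining $f'$ determines by quotient the surjection defining $f$, we have $\Gamma' \subset \Gamma$. The intermediate covering $X' \to X$ is a finite unramified cover of compact topological surfaces, so the pullback $H^1(X, \mathbb C) \hookrightarrow H^1(X', \mathbb C)$ is injective (with left inverse $\tfrac{1}{\deg(X'/X)}$ times the transfer) and is $\Gamma'$-equivariant. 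Given any nonzero $v \in H^1(X, \mathbb C)$ with finite $\Gamma$-orbit witnessing that $f$ is a counterexample, its pullback $v' \in H^1(X', \mathbb C)$ is therefore nonzero with finite $\Gamma'$-orbit, so $f'$ is a counterexample in the sense of \autoref{definition:pw-counterexample}.

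With $f'$ now known to be a Galois counterexample to Putman-Wieland over a base $Y$ of genus $2$, \autoref{theorem:isotypic} applies directly and rules out $f'$ being $\rho$-isotypic for any irreducible complex $H$-representation $\rho$, which is exactly the statement of the corollary. There is no substantive obstacle here; the transfer-based injectivity and equivariance of the pullback on $H^1$ are standard, and all of the genuine work has been done in establishing \autoref{theorem:non-unitary} and its consequence \autoref{theorem:isotypic}.
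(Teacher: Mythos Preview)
Your proof is correct and follows the same route as the paper, which simply notes that the corollary is immediate from \autoref{theorem:isotypic} since Markovi\'c's base curve $Y$ has genus $2$. Your verification that $f'$ is itself a counterexample is not strictly needed for the negative statement of the corollary (the contrapositive of \autoref{theorem:isotypic} already rules out any $\rho$-isotypic Galois counterexample over a genus $\geq 2$ base), but it is harmless and indeed clarifies why the corollary is not vacuous.
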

\begin{remark}
	\label{remark:non-isotypic}
	\autoref{corollary:no-isotypic} is especially interesting, as all the counterexamples where $Y$ has genus
$0$ or $1$ which we know of are $\rho$-isotypic for some $\rho$.
This example and other examples of covers which factor through this one
are the only ones we know of which are not
$\rho$-isotypic. It would be quite interesting to produce other counterexamples
to Putman-Wieland which are not $\rho$-isotypic for any $\rho$.
\end{remark}

\subsection{Overview}

In this paper, we give two algebro-geometric consequences of our work on the
Putman-Wieland conjecture.
In \autoref{section:background}, we recall background on parabolic bundles and
notation for versal families.
In \autoref{section:globally-generated}, we connect non-generically globally
generated vector bundles to the Putman-Wieland conjecture; this connection is
a straightforward consequences of our past work, but we spell it out for
completeness.
In \autoref{section:bilinear-pairing}, we discuss a certain isotypicity property
which would imply the Putman-Wieland conjecture, and we prove \autoref{theorem:isotypic}.
Finally, in \autoref{section:markovic-tosic} we give a proof of a slight improvement of the main result of \cite{markovic2}, \autoref{theorem:markovicT}.

\subsection{Acknowledgements}
We would like to thank
an anonymous referee,
Marco Boggi,
Anand Deopurkar,
Joe Harris, 
Neithalath Mohan Kumar,
Eric Larson,
Rob Lazarsfeld,
Eduard Looijenga,
Vladimir Markovi\'{c},
Anand Patel,
Andy Putman,
Will Sawin,
Ravi Vakil,
and
Isabel Vogt
for helpful discussions related to this paper.
Landesman was supported by the National Science Foundation under Award No.
DMS-2102955. Litt was supported by NSF Grant DMS-2001196.
This material is based upon work supported by the Swedish Research Council under
grant no. 2016-06596 while the authors were in residence at Institut Mittag-Leffler in Djursholm, Sweden during the fall of 2021.

\section{Background and notation}
\label{section:background}

Throughout, we work over the complex numbers, unless otherwise stated.
For a pointed finite-type scheme or Deligne-Mumford stack $(X, x)$ over
$\mathbb{C}$, we will use $\pi_1(X, x)$ to denote the topological fundamental group 
 of the associated complex-analytic space or analytic stack.
In the remainder of this section, we recall notation for parabolic bundles, so
that we can state a relevant proposition on vector bundles which are not
generically globally generated.
For more detail, we recommend the reader consult
\cite[\S2]{LL:geometric-local-systems}.
We also review notation for versal families, which we also used in
\cite{landesmanL:canonical-representations}.

\subsection{A lightning review of parabolic bundles}
\label{subsection:parabolic-review}

Fix a smooth proper connected curve $C$ over an arbitrary field and let $D = x_1 + \cdots + x_n$ be a reduced divisor on
$C$. 
Recall that a {\em parabolic bundle} on $(C,D)$ is a vector bundle $E$ on $C$, a
decreasing filtration $E_{x_j} = E_j^1 \supsetneq E_j^2 \supsetneq \cdots
\supsetneq E_j^{n_j+1} =
0$ for each $1 \leq j \leq n$, and an increasing sequence of real numbers $0\leq
\alpha_j^1<\alpha_j^2<\cdots<\alpha_j^{n_j}<1$ for each $1 \leq j \leq n$.
referred to as {\em weights}. 
We use $E_\star = (E, \{E^i_j\}, \{\alpha^i_j\})$ to denote the data of a parabolic bundle.

Given a parabolic bundle
$E_\star = (E, \{E^i_j\}, \{\alpha^i_j\})$,
let $J \subset \{1, \ldots, n\}$ denote the set of 
integers $j \in \{1, \ldots, n\}$ for which $\alpha^1_j = 0$, and define
\begin{align}
	\label{equation:coparabolic}
\widehat{E}_0 : = \ker( E \to \oplus_{j \in J} E_{x_j}/E_j^2).
\end{align}
(This is a special case of more general notation used for coparabolic bundles
as in \cite[2.2.8]{LL:geometric-local-systems} or the equivalent
\cite[Definition 2.3]{bodenY:moduli-spaces-of-parabolic-higgs-bundles}, but is all we will need for this
paper.)
In particular, $\widehat{E}_0 \subset E$ is a subsheaf.

We next make sense of a relative variant of the above notions. Namely,
let $\mathscr C \to \mathscr B$ be a relative smooth proper curve with
geometrically connected fibers and let $\mathscr D \subset \mathscr C\to
\mathscr B$ be a relative \'etale Cartier divisor. Then, a relative parabolic bundle on
$(\mathscr C, \mathscr D)$ is a vector bundle $\mathscr E$ on $\mathscr C$,
a decreasing filtration $\mathscr E|_\mathscr D = \mathscr E^1 \supsetneq
\mathscr E^2
\supsetneq \cdots \supsetneq \mathscr E^{m+1} = 0$, and an increasing sequence of real numbers
$0 \leq \alpha^1 < \alpha^2 < \cdots < \alpha^m < 1$ referred to as weights.
Above, the $\mathscr E^i$ are sheaves supported on $\mathscr D$.
We use $\mathscr E_\star = (\mathscr E, \{\mathscr E^i\}, \{\alpha^i\})$ to
denote the data of a relative parabolic bundle.
\begin{remark}
	\label{remark:}
Note that if $(\mathscr E, \{\mathscr E^i\}, \{\alpha^i\})$ is a relative
parabolic bundle on $(\mathscr C, \mathscr D)$, and $(C, D)$ is the fiber over a
point $b \in \mathscr B$, with $D = x_1 + \cdots + x_n$, then we can recover the
data of a parabolic bundle $(E, \{E^i_j\}, \{\alpha^i_j\})$ 
over a field, in the above sense, as follows.
Take $E := \mathscr
E|_C$ as the underlying vector bundle. Take the filtration at $x_j$ to be the
restriction of the filtration $\mathscr E^i$ to $x_j$, with repetitions removed.
Finally, take $\alpha^i_j:= \max_k \{ \alpha^k : \mathscr E^k|_{x_j} =
E^i_j\}$.
\end{remark}
To conclude our treatment of the relative setting, we define
\begin{align}
	\label{equation:relative-coparabolic}
	\widehat{\mathscr E}_0 : = \begin{cases}
		\ker( \mathscr E \to \mathscr E|_{\mathscr D} / \mathscr E^2) & \text{ if }
		\alpha^1= 0, \\
		 \mathscr E & \text{ otherwise.} \\
	\end{cases}
\end{align}

Returning to the absolute setting, 
parabolic
bundles admit a notion of \emph{parabolic stability}, analogous to the usual
notion of stability for vector bundles, which we next recall. 
First, the {\em parabolic degree} of a parabolic bundle $E_\star$ is
\begin{align*}
	\on{par-deg}(E_\star) := \deg(E) + \sum_{j=1}^n \sum_{i=1}^{n_j}
	\alpha^i_j \dim(E_j^i/E_j^{i+1}).
\end{align*}
Then, the {\em parabolic slope} is defined by $\mu_\star(E_\star) :=
\on{par-deg}(E_\star)/\rk(E_\star)$.
Any subbundle $F \subset E$ has an induced parabolic structure $F_\star \subset
E_\star$ defined as
follows:
the filtration
over $x_j$ on $F$ is obtained
	from the filtration $$F_{x_j} = E^1_j \cap F_{x_j} \supset E^2_j \cap
	F_{x_j} \supset \cdots \supset E^{n_j+1}_j \cap F_{x_j} = 0$$
	by removing redundancies. For the weight associated to $F^i_j \subset
	F_{x_j}$ one takes $$\max_{\substack{k, 1 \leq k \leq n_j}} \{ \alpha^k_j : F^i_j = E^k_j \cap
	F_{x_j}\}.$$
A parabolic bundle $E_\star$ is {\em parabolically semi-stable} if for every
nonzero subbundle $F \subset E$ with
induced parabolic structure $F_\star$, we have $\mu_\star(F_\star) \leq
\mu_\star(E_\star)$.

Mehta and Seshadri 
\cite[Theorem 4.1, Remark 4.3]{mehta1980moduli},
give a correspondence between
\emph{parabolically stable parabolic bundles of parabolic degree zero} on
$(C,D)$ and irreducible unitary local systems on $C\setminus D$.
This bijection sends a local system $\mathbb{V}$ to the Deligne canonical extension of $(\mathbb{V}\otimes \mathscr{O}, \on{id}\otimes d)$ with the parabolic structure induced by the connection (as in \cite[Definition 3.3.1]{LL:geometric-local-systems}).

For our later results, 
it will also be useful to have a lower bound on the rank of certain vector
bundles.
This follows from the following general result about non-generically generated
vector bundles, whose proof ultimately relies on Clifford's theorem for vector
bundles.
The reader may take this as a black box.
For a less technical variant of this statement, see 
\cite[Proposition 6.3.1]{LL:geometric-local-systems}. 
\begin{proposition}[{ \cite[Proposition 6.3.6]{LL:geometric-local-systems}}]
	\label{proposition:generic-parabolic-global-generation}
	Suppose $C$ is a smooth proper connected genus $g$ curve and
	$E_\star = (E, \{E^i_j\}, \{\alpha^i_j\})$ is a nonzero parabolic bundle $C$
	with respect to $D = x_1 + \cdots + x_n$.
		Suppose $E_\star$ is parabolically semistable. 
	Let $U \subset \widehat E_0$ be a (non-parabolic) subbundle with
		$c := \rk E - \rk U$ and
		$\delta := h^0(C, \widehat{E}_0) - h^0(C, U)$. 
	\begin{enumerate}
		\item[(I)] If $\mu_\star({E}_\star)> 2g-2 + n$, 
		then $\rk E >
			gc - \delta$.
		\item[(II)] If $\mu_\star({E}_\star)= 2g-2+n$, then $\rk
			E \geq gc - \delta$.
	\end{enumerate}
	In particular, if $\widehat{E}_0$ fails to be generically globally
	generated, and $\mu_\star({E}_\star) \geq 2g-2+n$, $\rk E \geq g.$
\end{proposition}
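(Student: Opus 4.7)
My plan is to combine parabolic semistability of $E_\star$ with a parabolic analogue of Clifford's theorem for vector bundles on curves.

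First, I would reduce to the case that $U$ is saturated in $\widehat{E}_0$: replacing $U$ by its saturation weakly increases $h^0(U)$ and preserves $\rk U$, so both $c$ and $\delta$ can only decrease, and the inequality becomes only harder to prove. After this reduction, the quotient $Q := \widehat{E}_0/U$ is locally free of rank $c$, and the short exact sequence $0 \to U \to \widehat{E}_0 \to Q \to 0$ gives $h^0(Q) \geq \delta$.

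Second, I would equip $U$ with the induced parabolic subbundle structure inside $E_\star$ and exploit parabolic semistability: $\mu_\star(U_\star) \leq \mu_\star(E_\star)$. Because $U \subset \widehat{E}_0$, at each point $x_j$ with $\alpha_j^1 = 0$ the induced weights on $U$ are strictly positive, which converts the parabolic slope inequality into an upper bound on $\deg U$ and correspondingly a lower bound on $\deg Q$ expressed in terms of the weights $\alpha_j^i$ and the parabolic degree of $E_\star$.

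Finally, I would apply Clifford's inequality for semistable vector bundles on curves to $Q$, which (in the slope regime forced by the previous step) bounds $h^0(Q)$ in terms of $\rk Q = c$ and $\deg Q$. Combined with $h^0(Q) \geq \delta$ and the degree estimate, this should yield $c(g-1) \leq \rk U + \delta$, equivalently $\rk E \geq gc - \delta$. In case (I), the strict hypothesis $\mu_\star(E_\star) > 2g-2+n$ rules out the equality case of Clifford and upgrades the conclusion to a strict inequality. The main obstacle is that $Q$ itself need not be semistable, so applying Clifford cleanly requires working through the Harder--Narasimhan filtration of $Q$ and using the parabolic semistability of $E_\star$ to control each piece; this parabolic-to-ordinary Clifford transfer is the technical heart of the argument.
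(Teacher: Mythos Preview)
The paper does not actually prove this proposition. It is quoted as a black box from \cite[Proposition 6.3.6]{LL:geometric-local-systems}; the only argument given here is the remark that the present formulation is equivalent to the cited one (via the identifications $\mu_\star(E_\star)=\mu_\star(\widehat{E}_\star)$ and ``parabolically semistable $\Leftrightarrow$ coparabolically semistable'') and that the ``In particular'' clause follows from (II) by taking $U$ to be the saturation of the image of $H^0(C,\widehat{E}_0)\otimes\mathscr O_C\to \widehat{E}_0$, so that $\delta=0$ and $c\geq 1$. There is therefore no proof in this paper to compare your sketch against, beyond the hint that the argument in the cited reference ``ultimately relies on Clifford's theorem for vector bundles.''

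That said, your outline has a genuine directional problem in the final step. You propose to combine three ingredients: $\delta\leq h^0(Q)$, a Clifford upper bound $h^0(Q)\leq \tfrac{1}{2}\deg Q + c$, and a \emph{lower} bound on $\deg Q$ coming from semistability. But a lower bound on $\deg Q$ only weakens the Clifford upper bound; chaining these gives at best $\deg Q\geq 2(\delta-c)$, which is another lower bound on $\deg Q$ and does not imply $c(g-1)\leq \rk U+\delta$. Moreover, since $Q$ is a quotient, semistability of $E_\star$ gives lower bounds on its (parabolic) slope, so the hypothesis $0\leq\mu\leq 2g-2$ needed for Clifford is not available on the $Q$ side in the first place.

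The workable version of your idea is to run Clifford on $U$ rather than on $Q$. Every Harder--Narasimhan piece of $U$ is a subsheaf of $E$ and hence has (parabolic) slope at most $\mu_\star(E_\star)$, which is exactly the regime where Clifford applies; summing over the filtration yields an upper bound of the shape $h^0(U)\leq g\cdot\rk U$ (with the parabolic corrections absorbed by your observation that $U\subset\widehat{E}_0$ forces strictly positive induced weights at the points with $\alpha_j^1=0$). Combining this with Riemann--Roch for $\widehat{E}_0$ then gives $\rk E+\delta\geq gc$, and the strict hypothesis in (I) enters through the equality analysis of Clifford. This is also what the paper's later use of the result (in the proof of \autoref{theorem:isotypic-derivative}, where \cite[Lemma 6.2.3]{LL:geometric-local-systems} is invoked as ``Clifford's theorem for vector bundles'' on a \emph{sub}bundle) suggests is happening in the cited proof.
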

\begin{remark}
\label{remark:}
The statement of \autoref{proposition:generic-parabolic-global-generation} is equivalent to \cite[Proposition
6.3.6]{LL:geometric-local-systems}, but differs slightly in that we write
``$E_\star$ is parabolically stable'' in place of ``$\widehat{E}_\star$ is
coparabolically stable'' and $\mu_\star(E_\star)$ in place of
$\mu_\star(\widehat{E}_\star)$.
However, by definition 
$E_\star$ is parabolically stable if and only if $\widehat{E}_\star$ is
coparabolically stable and $\mu_\star(E_\star) = \mu_\star(\widehat{E}_\star)$
\cite[Definitions 2.2.9 and 2.4.2]{LL:geometric-local-systems}.
Finally, the final ``In particular,\dots'' statement is an immediate consequence
of $(II)$.
\end{remark}

We next introduce the notation $E^\rho_\star$ as the parabolic bundle
corresponding to a representation $\rho$.
\begin{notation}
	\label{notation:rep-to-vector-bundle}
	Let $Y$ be a curve and $D \subset Y$ a divisor.
Recall that under the Mehta-Seshadri correspondence \cite{mehta1980moduli}, there is a bijection
between irreducible representations of $\pi_1(Y-D)$ and parabolic degree $0$ stable
parabolic vector bundles on $Y$, with parabolic structure along $D$.
Given an irreducible $H$ representation $\rho$, we use $E^\rho_\star$ to denote
the parabolic bundle corresponding to the representation
$\pi_1(Y-D) \simeq \pi_1(\Sigma_{g,n}) \xrightarrow{\phi} H \xrightarrow{\rho}
\on{GL}_{\dim \rho}(\mathbb C)$.
\end{notation}

\subsection{Notation for versal families}
\label{subsection:versal}

We set some notation to describe families of covers of curves.
\begin{notation}
	\label{notation:versal-family}
	We fix non-negative integers $(g,n)$ so that $n \geq 1$ if $g = 1$ and
	$n \geq 3$ if $g = 0$, i.e., $\Sigma_{g,n}$ is hyperbolic.
	Let $\mathscr{M}$ be a connected complex variety. A \emph{family
	of $n$-pointed curves of genus $g$ over $\mathscr{M}$} is a smooth
	proper morphism $\pi: \mathscr{C}\to \mathscr{M}$ of relative dimension one, with geometrically
	connected genus $g$ fibers, equipped with $n$ sections $s_1,\cdots,
	s_n: \mathscr{M}\to \mathscr{C}$ with disjoint images. Call such a family \emph{versal} if the
	induced map $\mathscr{M}\to \mathscr{M}_{g,n}$ is dominant and \'etale.
	Here $\mathscr M_{g,n}$ denotes the Deligne-Mumford moduli stack of $n$-pointed genus
	$g$ smooth proper curves with geometrically connected fibers.

	If $\pi: \mathscr{C}\to \mathscr{M}$ is a family of $n$-pointed curves, we let
	$\mathscr D := \coprod_{i=1}^n \on{im}(s_i)$ denote the union of the images of
	the sections, which is finite \'etale of degree $n$ over $\mathscr M$.
	Also let
	$\mathscr{C}^\circ:=\mathscr{C}\setminus \bigcup_i \on{im}(s_i)$,
	let $j: \mathscr C^\circ \hookrightarrow \mathscr{C}$ be the natural
	inclusion, and let $\pi^\circ := \pi \circ j : \mathscr C^\circ \to \mathscr M$ denote the composition. We will refer to $\pi^\circ: \mathscr{C}^\circ\to \mathscr{M}$ as the \emph{associated family of punctured curves}. If $\pi^\circ$ arises as the family of punctured curves associated to a versal family of $n$-pointed curves, we will call it a \emph{punctured versal family}.
	We will frequently use $m \in \mathscr M$ as a basepoint, and $c \in
	\mathscr C^\circ$ as a basepoint with $\pi^\circ(c) = m$.
	\begin{equation}
		\label{equation:}
		\begin{tikzcd}
			\mathscr C^\circ \ar {r}{j} \ar {rd}{\pi^\circ} &
			\mathscr C \ar {d}{\pi} & \mathscr D \ar{l} \\
			& \mathscr M \ar{ur}{s_1} \ar[bend
			right=70,swap,ur,"s_n"]
			\arrow[bend right = 60, ur, draw=none, "\ddots"]
		 &
		\end{tikzcd}
	\end{equation}
\end{notation}

We also will need the notion of a versal family of $\phi$-covers.
\begin{definition}
	\label{definition:versal-phi}
	Specify a surjection
	$\phi: \pi_1(\Sigma_{g, n}, v_0)\twoheadrightarrow H$, where $H$ is a finite group.
	A {\em versal family of $\phi$-covers} is the data of 
	\begin{enumerate}
\item a dominant \'etale morphism $\mathscr{M}\to \mathscr{M}_{g,n}$, with
$\pi^\circ: \mathscr{C}^\circ\to \mathscr{M}$ the associated punctured versal family,
\item a point $c\in \mathscr{C}^\circ$, $m=\pi^\circ(c)$, and an identification $i: \pi_1(\Sigma_{g, 0, n}, v_0)\simeq \pi_1(\mathscr{C}_m^\circ, c)$, and
\item a finite \'etale Galois $H$-cover $f: \mathscr{X}^\circ\to
	\mathscr{C}^\circ$ inducing a map
	$$\pi_1(\mathscr{C}_m^\circ, c) \to \pi_1(\mathscr{C}^\circ,
	c)/\pi_1(\mathscr{X}^\circ,x) \simeq H$$
	agreeing with the surjection $\phi$ under the
	identification of $(2)$.
\end{enumerate}
\end{definition}
We use $\mathscr X$ to denote the normalization of $\mathscr C$ in the
function field of $\mathscr X^\circ$, so $\mathscr X$ is the relative smooth proper curve
compactifying $\mathscr X^\circ$.

\section{Connection to vector bundles which are not generically globally
generated}\label{section:globally-generated}

We next set up notation to relate the Putman-Wieland conjecture to a certain
period map.

\begin{notation}
	\label{notation:period-map}
	Fix a surjection $\phi: \pi_1(\Sigma_{g, n}, v_0)\twoheadrightarrow H$, where $H$ is a finite group.
	Let $\mathscr C^\circ \to \mathscr M,  \mathscr{X}^\circ\to \mathscr{C}^\circ$ be a versal family of $\phi$-covers,
	as in \autoref{definition:versal-phi}.
	Let $\rho$ be an irreducible $H$-representation and let
	$\mathbb U^\rho$ denote the local system on $\mathscr C^\circ$ with
	monodromy representation given by $\pi_1(\mathscr C^\circ) \to H
	\xrightarrow{\rho} \on{GL}_{\dim \rho}(\mathbb C)$, where the first
	map is induced by the $H$-cover $\mathscr X^\circ \to \mathscr C^\circ$.

	Let $(\mathscr E^\rho_\star, \nabla^\rho)$
	denote the Deligne canonical extension with its natural parabolic structure
	(the Deligne canonical extension is defined in \cite[Definition
	4.1.2]{LL:geometric-local-systems}; the parabolic structure is defined in \cite[Definition 3.3.1]{LL:geometric-local-systems} for curves, but the definition there generalizes naturally to families)
	of
	$(\mathbb U^\rho \otimes_{\mathbb C} \mathscr O_{\mathscr C^\circ},
	\on{id} \otimes d)$ to $\mathscr C$.
	Let $\mathbb W^\rho := R^1 \pi^\circ_* (\mathbb U^\rho)$ and let
	$(\mathscr H^\rho, \nabla^\rho_{\on{GM}}) := (\mathbb W^\rho \otimes_{\mathbb C} \mathscr O_{\mathscr M},
	\on{id} \otimes d)$.
	
Because $\mathbb U^\rho$ is unitary, the Hodge-de Rham spectral sequence for
$R^1\pi^\circ_*(\mathbb{U}^\rho)$ 
degenerates \cite[Theorem
7.1(a)]{timmerscheidt:mixed-hodge-structure-for-unitary} (see also
\cite[Th\'eor\`eme 5.3.1]{saito1988modules} for a much more general result).
Hence there is a $2$-step Hodge filtration of 
$\mathscr H^\rho = \mathbb W^\rho \otimes \mathscr O_\mathscr{M}$
satisfying
\begin{equation}
	\label{equation:h-filtration}
\begin{aligned}
F^1 \mathscr H^\rho &\simeq \pi_* (\mathscr E^\rho \otimes
\Omega^1_{\mathscr{C}/\mathscr{M}}(\log \mathscr D)) \\
\mathscr H^\rho/ F^1 \mathscr H^\rho &\simeq R^1 \pi_* \mathscr E^\rho.
\end{aligned}
\end{equation}

In this paper, we will primarily be concerned with the weight $1$ part of
cohomology (essentially ignoring the part related to the punctures) and so we
now repeat the above construction for the weight $1$ part.
See, for example, 
\cite{deligne:hodge-ii} for background on the weight and hodge filtration,
\cite{timmerscheidt:mixed-hodge-structure-for-unitary}
for further background in the unitary case,
and \cite[\S4]{landesmanL:canonical-representations} for a concise description
of the weight and hodge filtration in the case of curves.

We now let $\mathbb V^\rho$ be the weight $1$ part of $\mathbb W^\rho$, which we can
explicitly identify as $R^1 \pi_*(j_* \mathbb U^\rho)$ by \cite[Theorem
4.1.1]{landesmanL:canonical-representations}.
Let 
\begin{align*}
	(\mathscr G^\rho, \nabla^\rho_{\on{GM}}) := (\mathbb V^\rho \otimes_{\mathbb C} \mathscr O_{\mathscr M}, \on{id} \otimes d).
\end{align*}
\end{notation}

To understand how the Hodge filtration of $\mathscr G^\rho$ interacts with the weight filtration, the following lemma
is key. The proof is a matter of unwinding definitions.
\begin{lemma}
	\label{lemma:weight-1-sections}
	Suppose $C$ is a smooth proper connected curve, $D \subset C$ is a
	divisor, and $\mathbb V$ is a
	local system on $C^\circ := C - D$ with unitary monodromy.
	Let $(E_\star, \nabla)$ be the Deligne canonical extension of $(\mathbb V
		\otimes_{\mathbb C} \mathscr O_{C^\circ}, \on{id}
	\otimes d)$ to $C$ as in, for example, \cite[Definition
	3.3.1]{LL:geometric-local-systems}.
	Under the natural isomorphism $F^1(H^1(C^\circ, \mathbb V)) \simeq H^0(C, E
	\otimes \omega_C(D))$, the weight $1$ part is given by
	\begin{align*}
	(W^1\cap F^1)(H^1(C^\circ, \mathbb V)) \simeq H^0(C, \widehat{E}_0
	\otimes \omega_C(D))\subset H^0(C, E
	\otimes \omega_C(D)).
	\end{align*}
\end{lemma}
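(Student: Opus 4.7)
The plan is to combine standard Hodge-theoretic descriptions of $F^\bullet$ and $W^\bullet$ on $H^1(C^\circ, \mathbb V)$ with a local residue computation at each puncture, reducing the identification of $W^1 \cap F^1$ to the defining short exact sequence for $\widehat{E}_0$.

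The isomorphism $F^1 H^1(C^\circ, \mathbb V) \simeq H^0(C, E \otimes \omega_C(D))$ is the standard consequence of the $E_1$-degeneration of the Hodge-to-de Rham spectral sequence for the log de Rham complex $(E \otimes \Omega^\bullet_C(\log D), \nabla)$ (cf.\ \eqref{equation:h-filtration}), together with the stupid-truncation description of $F^\bullet$. To identify the weight filtration, I would use the distinguished triangle
\[ j_*\mathbb V \to Rj_*\mathbb V \to \bigoplus_{j=1}^{n} i_{x_j,*}(R^1j_*\mathbb V)_{x_j}[-1] \xrightarrow{+1} \]
coming from the canonical filtration on $Rj_*\mathbb V$. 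The resulting long exact sequence identifies $W^1 H^1(C^\circ, \mathbb V) = \on{Im}(H^1(C, j_*\mathbb V) \to H^1(C^\circ, \mathbb V))$ with the kernel of the residue/monodromy map $H^1(C^\circ, \mathbb V) \to \bigoplus_j \mathbb V_{x_j}^{T_j}$, where $T_j$ denotes the local monodromy. Since $\mathbb V$ is unitary, each $T_j$ is semisimple, and under the Deligne-canonical identification $\mathbb V_{x_j} \simeq E_{x_j}$ the invariants $\mathbb V_{x_j}^{T_j}$ correspond to the zero-eigenspace of $\on{Res}_{x_j}\nabla$, which is $E_{x_j}/E_j^2$ for $j \in J$ and zero otherwise.

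The main technical step, and the principal obstacle, is verifying that the restriction of this residue map to $F^1 = H^0(C, E \otimes \omega_C(D))$ is the naive Poincar\'e residue composed with projection to the zero-eigenspace --- explicitly, $\sigma \mapsto (\on{Res}_{x_j}\sigma \bmod E_j^2)_{j \in J}$. This is a local compatibility between the connecting homomorphism of the distinguished triangle above and the Poincar\'e residue, and can be checked by a \v{C}ech (or Dolbeault) cocycle computation on a punctured disk around each $x_j$, using a holomorphic frame of $E$ diagonalizing $\on{Res}_{x_j}\nabla$.

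Finally, the defining short exact sequence
\[ 0 \to \widehat{E}_0 \to E \to \bigoplus_{j \in J} E_{x_j}/E_j^2 \to 0, \]
with skyscraper quotient, tensored by the locally free sheaf $\omega_C(D)$ and taken in global sections, identifies $H^0(C, \widehat{E}_0 \otimes \omega_C(D))$ with the kernel of the projected-residue map $H^0(C, E \otimes \omega_C(D)) \to \bigoplus_{j \in J} E_{x_j}/E_j^2$. By the previous step this kernel is precisely $W^1 \cap F^1$, completing the proof.
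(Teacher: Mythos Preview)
Your proof is correct and follows essentially the same two-step structure as the paper: first identify $(W^1 \cap F^1)$ with the kernel of the residue map to the local-monodromy invariants, then identify that kernel with $H^0(C, \widehat{E}_0 \otimes \omega_C(D))$ via the defining sequence for $\widehat{E}_0$. The only real difference is one of packaging: where the paper cites Timmerscheidt's degeneration results directly for the first step, you unpack it via the distinguished triangle $j_*\mathbb V \to Rj_*\mathbb V \to \oplus_j i_{x_j,*}\mathbb V_{x_j}^{T_j}[-1]$ and an explicit local compatibility check between the connecting map and the Poincar\'e residue --- which is exactly what underlies the cited reference.
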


\begin{proof}
	First, we claim $(W^1\cap F^1)(H^1(C^\circ, \mathbb V))$
	is identified with the subspace of
	$H^0(C, E \otimes \omega_C(D))$ vanishing under the residue map
	$H^0(C, E \otimes \omega_C(D)) \to i_* (\mathscr O_D \otimes
	E|_D)$, for $i: D \to C$ the inclusion. 
	Indeed, this follows from the
	degeneration of the Hodge-de Rham spectral sequence for unitary local
	systems \cite[Theorem
	7.1(a)]{timmerscheidt:mixed-hodge-structure-for-unitary}
	and the definition of the weight and Hodge filtrations as in
	\cite{timmerscheidt:mixed-hodge-structure-for-unitary}, culminating in
	\cite[Definition 6.1]{timmerscheidt:mixed-hodge-structure-for-unitary}.

	It only remains to identify
	\begin{align*}
		H^0(C, \widehat{E}_0 \otimes \omega_C(D)) = \ker \left( H^0(C, E \otimes_{\mathscr O_X} \omega_C(D)) \to i_* (\mathscr O_D \otimes
		E|_D) \right)
	\end{align*}
	We will explain why this equality
	is a matter of unwinding definitions.
	Specifically, we must unwind the definition of the Deligne canonical
	extension which yields the parabolic structure on $E$, as in
	\cite[Remarques 5.5(i)]{deligne:regular-singular}, (see also 
	\cite[Definition 3.3.1]{LL:geometric-local-systems} for a
summary in the relevant case) and the definition of $\widehat{E}_0$ 
	in \eqref{equation:coparabolic}.
	Indeed, using notation as in \autoref{subsection:parabolic-review},
	we recall $J$ denotes the set of integers $j \in \{ 1, \ldots, n\}$
	for which $\alpha^1_j = 0$. 
	For any $j \in J$,
	by definition of the Deligne canonical extension, 
	$E_j^2$ is the sum of the generalized eigenspaces of the residue map at
	$x_j$ with nonzero
	eigenvalue, as described in \cite[Definition
	3.3.1]{LL:geometric-local-systems}.
	Because the $0$-generalized eigenspace at $x_j$ is semisimple by
	assumption that the monodromy of $\mathbb V$ is unitary, it is identically $0$. 
	Hence, $\ker(E \to E_{x_j}/E_j^2)$ is the kernel of the residue
	map at $x_j$. It follows that $\widehat{E}_0 : = \ker( E \to \oplus_{j \in J}
	E_{x_j}/E_j^2)$
	is the kernel of the residue map along $D$, 
	and the analogous statement holds for $E$ replaced by $E \otimes
	\omega_C(D)$.
\end{proof}

Consequently we obtain a simple
description of the graded parts of the Hodge filtration of $\mathscr G^\rho$.
Since the weight $1$ part of the Hodge filtration surjects onto the second
graded piece of the weight filtration, \autoref{lemma:weight-1-sections}
yields the following description of the Hodge filtration of $\mathscr G^\rho$,
where we use notation as in 
\eqref{equation:relative-coparabolic}:
\begin{equation}
	\label{equation:g-filtration}
\begin{aligned}
F^1 \mathscr G^\rho &\simeq \pi_* ((\widehat{\mathscr E^\rho})_0 \otimes
\Omega^1_{\mathscr{C}/\mathscr{M}}(\log \mathscr D)) \\
\mathscr G^\rho/ F^1 \mathscr G^\rho &\simeq R^1 \pi_* \mathscr E^\rho.
\end{aligned}
\end{equation}

For $m \in \mathscr M$, let $\overline{\nabla}^\rho_m$ denote the fiber over $m$ of the map
\begin{align}
	\label{equation:derivative-period}
\overline{\nabla}^\rho: F^1 \mathscr G^\rho \hookrightarrow \mathscr G^\rho
\xrightarrow{\nabla^\rho_{\on{GM}}} \mathscr G^\rho \otimes
	\Omega^1_{\mathscr M} \twoheadrightarrow \mathscr G^\rho/F^1 \mathscr G^\rho \otimes
	\Omega^1_{\mathscr M}
\end{align}
where the first map is the natural inclusion and the last is the natural quotient map.

Using a version of the theorem of the fixed part, we now show how
counterexamples to the Putman-Wieland conjecture yield nonzero elements in the kernel of
$\overline{\nabla}^\rho_m$ (compare to \cite[Lemma 6.1.1]{landesmanL:canonical-representations}).
\begin{lemma}
	\label{lemma:period-map-kernel}
	Suppose $f: X \to Y$ furnishes a counterexample to Putman-Wieland.
	Let $\mathscr X^\circ \xrightarrow{\widetilde{f}^\circ} \mathscr C^\circ
	\xrightarrow{\pi^\circ} \mathscr M$ denote a
	versal family of $\phi$-covers, as in \autoref{definition:versal-phi},
	whose fiber over some $m\in \mathscr{M}$ is $f^\circ: X^\circ \to
	Y^\circ$ with regular compactification given by
	$f: X \to Y$.
Then there is some irreducible complex representation $\rho$ of $H$ such that $\overline{\nabla}^\rho_m$,
	as defined in \eqref{equation:derivative-period},
	has a nontrivial kernel.
	Moreover, if $f$ is $\rho$-isotypic in the sense of
	\autoref{definition:pw-counterexample}, then $\overline{\nabla}^\rho_m$
	vanishes.
\end{lemma}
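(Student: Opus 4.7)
The plan is to apply the theorem of the fixed part (global invariant cycles) to the polarizable variation of Hodge structures $\mathscr G^\rho$, using a finite-orbit homology class produced by the counterexample hypothesis to extract a flat section whose $(1,0)$-part lies in $\ker \overline{\nabla}^\rho_m$.

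First I would choose $\rho$ and arrange monodromy invariance. The hypothesis furnishes a nonzero $v \in H^1(X, \mathbb C)$ with finite $\Gamma$-orbit. Since the $\Gamma$-action commutes with the $H$-action, the isotypic decomposition $v = \sum_\rho v^\rho$ is $\Gamma$-equivariant, so each $v^\rho$ has finite orbit. Pick $\rho$ with $v^\rho \neq 0$. Replacing $\mathscr M$ by the finite \'etale cover corresponding to the (finite-index) stabilizer of $v^\rho$ in $\pi_1(\mathscr M, m)$ --- which keeps the family versal in the sense of \autoref{definition:versal-phi} and does not affect the conclusion --- we may assume that $v^\rho$ is invariant under the monodromy of $\mathbb V^\rho$ on $\mathscr M$.

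Second I would invoke the theorem of the fixed part. Because $\mathbb V^\rho \cong R^1\pi_*(j_*\mathbb U^\rho)$ is the pure weight-$1$ part of the polarizable VMHS $\mathbb W^\rho$ (see \autoref{notation:period-map}), the sheaf $\mathscr G^\rho$ underlies a polarizable pure VHS of weight $1$ on $\mathscr M$, and Deligne's global invariant cycle theorem applies: every monodromy-invariant vector in $(\mathscr G^\rho)_m$ extends to a global flat section $V$ of $\mathscr G^\rho$ whose Hodge components $V^{1,0}$ and $V^{0,1}$ are themselves flat global sections. Writing $v^\rho = w^{1,0} + w^{0,1}$, at least one component is nonzero; if $w^{1,0} = 0$, then $\overline{v^\rho}$ is a nonzero finite-orbit vector in $H^1(X, \mathbb C)^{\bar\rho}$ with nonzero $(1,0)$-part $\overline{w^{0,1}}$, and we relabel $\rho := \bar\rho$. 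Then $V^{1,0}$ is a nonzero global section of $F^1 \mathscr G^\rho$; since $V^{1,0}$ is flat, its image under the composition defining $\overline{\nabla}^\rho$ in \eqref{equation:derivative-period} vanishes, and evaluating at $m$ yields the required nonzero element of $\ker \overline{\nabla}^\rho_m$.

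For the $\rho$-isotypic refinement, the hypothesis is that every element of $H^1(X, \mathbb C)^\rho$ has finite $\Gamma$-orbit; after a further finite \'etale cover of $\mathscr M$ the entire fiber $(\mathscr G^\rho)_m$ is monodromy-invariant, and the same application of the global invariant cycle theorem extends the full Hodge filtration to a flat one, so that $F^1 \mathscr G^\rho$ is a flat sub-bundle and $\overline{\nabla}^\rho$ vanishes identically, in particular at $m$. The principal technical point is this passage from monodromy invariance of a Hodge class to a flat splitting of the Hodge filtration, handled by the theorem of the fixed part applied to the pure weight-$1$ piece $\mathbb V^\rho$ (not to the full VMHS $\mathbb W^\rho$, where the statement is more subtle); the secondary subtlety is the possible conjugation swap $\rho \leftrightarrow \bar\rho$ needed when an invariant class happens to be of pure type $(0,1)$.
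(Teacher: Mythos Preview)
Your proof is correct and follows essentially the same approach as the paper: both use the theorem of the fixed part to conclude that monodromy-invariants form a sub-Hodge structure, yielding a flat $(1,0)$-class in $\ker\overline{\nabla}^\rho_m$. The only difference is tactical: the paper works with the rational structure on the full $H^1$ and decomposes into $\rho$-isotypic pieces at the end (so a nonzero rational weight-$1$ sub-Hodge structure automatically meets $F^1$ nontrivially), whereas you decompose first and compensate for a possibly pure-$(0,1)$ invariant via the conjugation swap $\rho\leftrightarrow\bar\rho$.
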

\begin{proof}
	To begin, let us set up some notation.
	By assumption $f$ corresponds to a covering of topological surfaces
	$h: \Sigma_{g',n'} \to \Sigma_{g,n}$ 
	which furnishes a
	counterexample to Putman-Wieland.
	Then, there
	is some finite index $\Gamma' \subset \Gamma$ fixing a nonzero vector in
	$H^1(\Sigma_{g'}, \mathbb C)$.
	Because this action is tensored up from an action defined over the rational numbers, $\Gamma'$ also
	fixes a nonzero vector $v \in H^1(\Sigma_{g'}, \mathbb Q)$.
After replacing $\mathscr{M}$ by a finite \'etale cover, we may assume the
action of $\pi_1(\mathscr{M})$ on $H^1(\Sigma_{g'}, \mathbb Q)$ factors through
$\Gamma'$. Let $V=H^0(\mathscr{M}, R^1(\widetilde{f}^\circ \circ \pi^\circ)_*
\mathbb{Q})$; by the theorem of the fixed part \cite[14.52]{peters2008mixed}, 
$V$ has a natural mixed Hodge
structure compatible with its natural embedding $V\hookrightarrow
(R^1(\widetilde f^\circ \circ \pi^\circ)_* \mathbb{Q})_{x}$, for any $x\in
\mathscr{M}$. By assumption $V$ is nonzero, and moreover intersects the weight
$1$ part of $R^1(\widetilde f^\circ \circ \pi^\circ)_* \mathbb{Q})_{x}$, because our chosen vector fixed by $\Gamma'$ lies in
the weight $1$ part
$H^1(\Sigma_{g'}, \mathbb C) = W^1(H^1(\Sigma_{g',n'}, \mathbb C))$ of the
cohomology.

We next show that for some $\rho$, $\overline{\nabla}^\rho_m$ has a nontrivial kernel.
Let $j: Y^\circ \to Y$ denote the inclusion.
As $W^1V$ is a nonzero rational sub-mixed Hodge structure of the mixed Hodge structure on $H^1(Y^\circ, f^\circ_*
\mathbb{Q})= (R^1 (\widetilde{f}^\circ \circ \pi^\circ)_* \mathbb{Q})_m$, $W^1V$ has non-trivial
intersection with $(F^1\cap W^1)H^1(Y^\circ, f^\circ_*
\mathbb{Q})$, i.e. $(F^1\cap W^1)V$ is nonzero. Any element of $V= H^0(\mathscr{M}, R^1(\widetilde{f}^\circ \circ \pi^\circ)_*
\mathbb{Q}) $ lies in the
kernel of the Gauss-Manin connection by definition. Hence any element of $(F^1\cap W^1)V$ lies in the kernel of $$\overline{\nabla}_m: F^1H^1(Y, j_* f^\circ_* \mathbb C)\to H^1(Y, j_*f^\circ_* \mathbb C)/F^1H^1(Y, j_* f^\circ_* \mathbb C)\otimes T_{\mathscr{M}, m}^\vee.$$ Decomposing into $\rho$-isotypic pieces yields the
claim
that for some $\rho$, $\overline{\nabla}^\rho_m$ has a nontrivial kernel:
indeed, $R^1(\widetilde f^\circ \circ \pi^\circ)_*\mathbb{C}$ is a direct sum of copies of
$\mathbb{W}^\rho$, as defined in \autoref{notation:period-map},
by the Leray spectral sequence associated to the composition
$\mathscr X^\circ \xrightarrow{\widetilde{f}^\circ} \mathscr C^\circ
\xrightarrow{\pi^\circ} \mathscr M$.
Since this decomposition respects the weight filtration, we similarly find
$W^1(R^1(\widetilde f^\circ \circ \pi^\circ)_*\mathbb{C})$ is a direct sum of copies of
$\mathbb{V}^\rho$.

Finally, if $f$ is $\rho$-isotypic, this means that
$V$ contains all of $W^1(R^1 (\widetilde{f}^\circ \circ \pi^\circ)_* \mathbb{Q})^\rho$, and hence
$\overline{\nabla}_m^\rho$ vanishes.
\end{proof}

For the next statement, recall that a vector bundle $V$ on an integral variety $X$ is {\em generically
globally generated} if the evaluation map $H^0(X, V) \otimes \mathscr O_X \to V$
is a surjection over the generic point of $X$.
Recall also that for $\rho$ an $H$-representation and $X \to Y$ a Galois $H$-cover, the
associated map $\pi_1(Y) \to H \xrightarrow{\rho} \on{GL}_{\dim \rho}(\mathbb
C)$ yields a parabolic vector bundle $E^\rho_\star$ on $Y$ with underlying
vector bundle $E^\rho := E^\rho_0$ under the Mehta-Seshadri correspondence, as in
\autoref{notation:rep-to-vector-bundle}.

\begin{proposition}
	\label{proposition:non-ggg}
	Suppose $f: X \to Y$ 
	furnishes a counterexample to Putman-Wieland. 
	Then there exists an irreducible $H$-representation $\rho$ so that 
	the vector bundle
	$(E^\rho)^\vee \otimes \omega_Y$ on $Y$ is not generically globally generated.
\end{proposition}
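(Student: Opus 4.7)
The plan is to translate the nonzero kernel element of $\overline{\nabla}^\rho_m$ produced by \autoref{lemma:period-map-kernel} into the desired failure of generic global generation, via Serre duality. By that lemma, there is an irreducible $H$-representation $\rho$ and a nonzero $\tau \in \ker \overline{\nabla}^\rho_m$; using \eqref{equation:g-filtration} to describe $F^1\mathscr G^\rho$, one may view $\tau$ as a nonzero element of $H^0(Y, \widehat{E^\rho}_0 \otimes \omega_Y(D))$.

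The next step is to identify $\overline{\nabla}^\rho_m$ with cup product against the Kodaira-Spencer class. Using versality of the family (so that $T_{\mathscr M, m} \simeq H^1(Y, T_Y(-D))$) together with Griffiths transversality for the logarithmic Gauss-Manin connection, $\overline{\nabla}^\rho_m$ should be the adjoint of the cup product
\[
H^1(Y, T_Y(-D)) \otimes H^0(Y, \widehat{E^\rho}_0 \otimes \omega_Y(D)) \to H^1(Y, E^\rho),
\]
induced by the natural pairing $T_Y(-D) \otimes (\widehat{E^\rho}_0 \otimes \omega_Y(D)) \to \widehat{E^\rho}_0 \hookrightarrow E^\rho$. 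Applying Serre duality, under the identifications $T_{\mathscr M, m}^\vee = H^0(Y, \omega_Y^{\otimes 2}(D))$ and $H^1(Y, E^\rho)^\vee = H^0(Y, (E^\rho)^\vee \otimes \omega_Y)$, this cup product becomes Serre-dual to the contraction
\[
\mu: H^0(Y, (E^\rho)^\vee \otimes \omega_Y) \otimes H^0(Y, \widehat{E^\rho}_0 \otimes \omega_Y(D)) \to H^0(Y, \omega_Y^{\otimes 2}(D))
\]
arising from the natural pairing $(E^\rho)^\vee \otimes E^\rho \to \mathscr O_Y$. The hypothesis $\tau \in \ker \overline{\nabla}^\rho_m$ thus translates into $\mu(s \otimes \tau) = 0$ for every $s \in H^0(Y, (E^\rho)^\vee \otimes \omega_Y)$.

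The conclusion is then immediate by a pointwise argument. Suppose for contradiction that $(E^\rho)^\vee \otimes \omega_Y$ is generically globally generated. Then at a general $y \in Y \setminus D$ the evaluation map $H^0(Y, (E^\rho)^\vee \otimes \omega_Y) \to ((E^\rho)^\vee \otimes \omega_Y)|_y$ is surjective, while the fiberwise pairing $(E^\rho)^\vee|_y \otimes E^\rho|_y \to \mathbb{C}$ is perfect. Combined with $\mu(s\otimes \tau) = 0$ for all global $s$, this forces $\tau|_y = 0$ at a general $y$, contradicting the fact that $\tau$ is a nonzero section of $\widehat{E^\rho}_0 \otimes \omega_Y(D)$.

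I expect the main obstacle to lie in the second step: correctly identifying $\overline{\nabla}^\rho_m$ as the stated cup product in which the source involves the coparabolic subsheaf $\widehat{E^\rho}_0$ rather than $E^\rho$ itself. This requires carefully tracking the interaction of the Deligne canonical extension, its parabolic structure, and the logarithmic Gauss-Manin connection in families, but I expect it to be closely parallel to computations already performed in \cite{landesmanL:canonical-representations}, so the task should primarily be one of adapting existing arguments rather than developing new ones.
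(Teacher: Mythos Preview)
Your proposal is correct and takes essentially the same approach as the paper: both start from the nonzero $\tau\in\ker\overline{\nabla}^\rho_m$ supplied by \autoref{lemma:period-map-kernel}, identify $\overline{\nabla}^\rho_m$ with the multiplication map $H^0(Y,(E^\rho)^\vee\otimes\omega_Y)\otimes H^0(Y,\widehat{E^\rho}_0\otimes\omega_Y(D))\to H^0(Y,\omega_Y^{\otimes 2}(D))$, and conclude by observing that $\mu(-,\tau)=0$ forces all sections of $(E^\rho)^\vee\otimes\omega_Y$ into the proper subsheaf $\ker(\tau:\,(E^\rho)^\vee\otimes\omega_Y\to\omega_Y^{\otimes 2}(D))$. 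The paper simply cites \cite[Theorem~5.1.6]{landesmanL:canonical-representations} for the identification you flag as the ``main obstacle,'' confirming your expectation that this step is already handled there.
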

\begin{proof}
By \autoref{lemma:period-map-kernel},
(with notation as in the statement of \autoref{lemma:period-map-kernel}), there exists $\rho$ such that
$\overline{\nabla}^\rho_m$ has a nontrivial kernel for any $m \in \mathscr M$.
Fixing $m \in \mathscr M$, let $Y := \mathscr C_m, E^\rho := (\mathscr
E_0^\rho)_m$ and $v \in \ker \overline{\nabla}^\rho_m$ nonzero.
Let $D \subset Y$ denote the branch divisor of $f: X \to Y$.
We can identify 
\begin{align*}
	v \in F^1 \mathscr G^\rho \simeq H^0(Y, (\widehat{E^\rho})_0 \otimes \omega_Y(D)) \subset H^0(Y, E^\rho \otimes \omega_Y(D))
\end{align*}
and think of $v$ as a nonzero map $\mu_v: (E^\rho)^\vee \otimes \omega_Y \to \omega_Y^{\otimes
2}(D)$.
By \cite[Theorem 5.1.6]{landesmanL:canonical-representations},
the vanishing of $\overline{\nabla}^\rho_m(v)$ is equivalent to the vanishing of the
induced map $H^0(Y, (E^\rho)^\vee \otimes \omega_Y) \to H^0(Y, \omega_Y^{\otimes
2}(D))$.
In other words, the map $\mu_v$ defined above induces the $0$ map on global sections. 
Therefore, all global sections of $(E^\rho)^\vee \otimes \omega_Y$
factor through the proper subbundle $\ker \mu_v$,
and so 
$(E^\rho)^\vee \otimes \omega_Y$ is not generically globally generated.
\end{proof}

\section{Isotypicity and non-unitarity}\label{section:bilinear-pairing}
The main result of this section is \autoref{theorem:isotypic}, which states that
counterexamples to Putman-Wieland in genus $\geq 2$ cannot be isotypic, i.e.,
there exists an element of 
$H^1(\Sigma_{g'}, \mathbb C)^\rho$ with infinite orbit under the action of a finite
index subgroup of the mapping class group. We show more, namely \autoref{theorem:non-unitary}: if $X\to Y$ is an $H$-cover, where $Y$ has genus at least $2$, the virtual action of the mapping class group of $Y$ on an $H$-isotypic component of the cohomology of $X$ is non-unitary.

In \autoref{corollary:boggi-looijenga} we use this to show how a
 result from the retracted paper of Boggi-Looijenga \cite{boggiL:curves-with-prescribed-symmetry} would imply the Putman-Wieland conjecture.

Our main tool for proving this is a natural bilinear pairing, which we
next introduce.
Let $C$ be a smooth proper connected curve of genus $g$, $D\subset C$ a reduced
divisor, $E_\star$
a parabolic vector bundle on $(C,D)$, and $E := E_0$. 
As described in \cite[(5.5)]{landesmanL:canonical-representations}, there is a
nondegenerate bilinear pairing
\begin{align}
	\label{equation:bilinear-pairing}
	B_E:
(E \otimes \omega_C(D))\times (E^\vee\otimes \omega_C) \to
\omega_C^{\otimes 2}(D)
\end{align}
given as the composition $$B_E: (E \otimes \omega_C(D))\times (E^\vee\otimes \omega_C)\overset{\otimes}{\longrightarrow} (E\otimes E^\vee)\otimes \omega_C^{\otimes 2}(D)\overset{\on{tr}\otimes \on{id}}{\longrightarrow} \omega_C^{\otimes 2}(D),$$
where $\on{tr}$ denotes the trace pairing ${E}\otimes {E}^\vee\to
\mathscr{O}_C.$ 
Note this pairing is nondegenerate, since on the fiber over $x$ it is obtained by the
pairing between the vector spaces $E_x$ and $E_x^\vee$.
By restriction to $H^0(C, \widehat{E}_0 \otimes \omega_C(D)) \subset H^0(C, E \otimes
\omega_C(D))$, we also obtain an induced pairing
\begin{align*}
	H^0(C, \widehat{E}_0 \otimes \omega_C(D)) \times H^0(C, E^\vee \otimes
	\omega_C) \to H^0(C, \omega^{\otimes 2}_C(D)).
\end{align*}

\begin{theorem}
	\label{theorem:isotypic-derivative}
	Let $E_\star$ be a semistable parabolic bundle on $(C,D)$ of parabolic
	degree zero, with underlying vector bundle $E := E_0$. 
	Suppose $g \geq 2$. Then the pairing $H^0(\widehat{E}_0 \otimes \omega_C(D)) \otimes H^0(E^\vee
	\otimes \omega_C) \to H^0(C, \omega_C^{\otimes 2}(D))$ cannot vanish.
\end{theorem}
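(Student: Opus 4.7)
The plan is to argue by contradiction. Suppose the pairing
\[
B_E : H^0(\widehat{E}_0 \otimes \omega_C(D)) \otimes H^0(E^\vee \otimes \omega_C) \to H^0(\omega_C^{\otimes 2}(D))
\]
vanishes identically (both factors being nonzero, else the claim is vacuous). I would extract a pair of subbundles encoding the ``span of global sections'' on each side, observe that the vanishing of the pairing forces them to annihilate each other under the trace pairing on $E \otimes E^\vee$, and then combine this with Proposition \ref{proposition:generic-parabolic-global-generation} to obtain a numerical contradiction when $g\geq 2$.

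Concretely: let $F' \subseteq \widehat{E}_0 \subseteq E$ be the saturation of the image of the evaluation map $H^0(\widehat{E}_0 \otimes \omega_C(D)) \otimes \omega_C(D)^{-1} \to \widehat{E}_0$, and analogously $G' \subseteq E^\vee$ the saturation of the image of $H^0(E^\vee \otimes \omega_C) \otimes \omega_C^{-1} \to E^\vee$. At the generic point $\eta$ of $C$, after trivializing the line bundle factors, $B_E$ is the canonical perfect pairing $E_\eta \otimes E_\eta^\vee \to \mathbb{C}$, so the hypothesis that $B_E$ vanishes on all global sections forces $F'_\eta$ and $G'_\eta$ to pair to zero. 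Thus $\rk F' + \rk G' \leq \rk E$.

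Next, since $G'$ is nonzero, $F'$ is a \emph{proper} subbundle of $\widehat{E}_0$, so $\widehat{E}_0 \otimes \omega_C(D) = \widehat{E_\star \otimes \omega_C(D)}_0$ is not generically globally generated. I would apply Proposition \ref{proposition:generic-parabolic-global-generation}(II) to $E_\star \otimes \omega_C(D)$, which is parabolically semistable of parabolic slope exactly $2g-2+n$. Taking $U = F' \otimes \omega_C(D)$ gives $\delta = 0$ by construction of $F'$ and $c = \rk E - \rk F'$, yielding
\[
\rk F' \;\geq\; \tfrac{g-1}{g}\,\rk E.
\]
Applying the parallel argument on the dual side (using the Mehta--Seshadri dual parabolic structure on $E^\vee$, which is semistable of parabolic degree zero, together with the identification $E^\vee_{MS} \otimes \omega_C(D) \simeq E^\vee \otimes \omega_C$) would give the analogous bound $\rk G' \geq \tfrac{g-1}{g}\rk E$. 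Summing,
\[
\rk E \;\geq\; \rk F' + \rk G' \;\geq\; \tfrac{2(g-1)}{g}\,\rk E,
\]
forcing $g \leq 2$, which contradicts the hypothesis when $g \geq 3$. For the borderline case $g = 2$ all inequalities are equalities; here one must invoke the equality case of Proposition \ref{proposition:generic-parabolic-global-generation}(II), which imposes Clifford-type rigidity on $F'$ and $G'$ that can be ruled out using parabolic semistability and the assumption that both $H^0$ groups are nonzero.

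The main obstacle I anticipate is a clean setup of the dual application of Proposition \ref{proposition:generic-parabolic-global-generation}: one must track how the Mehta--Seshadri dual parabolic structure on $E^\vee$ interacts with the hat operation and with the twist by $\omega_C$ rather than $\omega_C(D)$, in particular handling the weight-zero filtration pieces so that global sections of $E^\vee \otimes \omega_C$ are correctly matched with sections of the appropriate hat-bundle. A secondary difficulty is dispensing with the borderline $g = 2$ equality case, where the numerical argument only gives equality and the finer equality statement of Proposition \ref{proposition:generic-parabolic-global-generation}(II) must be exploited.
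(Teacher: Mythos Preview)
Your proposal is correct and follows essentially the same strategy as the paper's proof: the paper phrases the inequality $\rk F' + \rk G' \leq \rk E$ as an isotropy statement for the quadratic form associated to $B_E$, which is equivalent to your annihilator argument at the generic point, and applies Proposition~\ref{proposition:generic-parabolic-global-generation} to both $E_\star \otimes \omega_C(D)$ and $E_\star^\vee \otimes \omega_C(D)$ exactly as you outline. Your anticipated obstacles are handled just as you suspect---the identification $\reallywidehat{(E_\star^\vee \otimes \omega_C(D))}_0 \simeq E^\vee \otimes \omega_C$ is what makes the dual application go through, and the $g=2$ equality case is dispatched by tracing equality through the Clifford-type bound of \cite[Lemma~6.2.3]{LL:geometric-local-systems}.
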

\begin{proof}
	First, we may assume $\rk E > 1$ by \cite[Proposition
	5.2.3]{landesmanL:canonical-representations}, as $g \geq 2$.
	If $F_\star$ is a parabolic sheaf, we call the image of $H^0(C, F_0)
	\otimes \mathscr O_C \to F_0 \subset F_\star$ the globally generated
	subsheaf of
	$F_\star$, which is by definition also a subsheaf of $F_0$.
	Let $U \subset \widehat{E}_\star \otimes \omega_C(D)$ denote the globally generated
	subsheaf of $\widehat{E}_\star \otimes \omega_C(D)$ and
	$V$ denote the globally generated subsheaf  
	of $\reallywidehat{((E_\star)^\vee \otimes \omega_C(D))}_0$.
	Under the identifications 
\begin{align*}
	\left(\widehat{E}_\star \otimes \omega_C(D)\right)_0 &= \widehat{E}_0 \otimes \omega_C(D) \\
\reallywidehat{((E_\star)^\vee \otimes \omega_C(D))}_0 &= E^\vee \otimes
\omega_C,
\end{align*}
(see \cite[Definition 2.6.1]{LL:geometric-local-systems} for the notion of a dual of a parabolic bundle)
$U$ and $V$ are also the globally generated subsheaves of
$\widehat{E}_0 \otimes \omega_C(D)$ and $E^\vee \otimes \omega_C$ respectively.

Let $c_V := \rk E - \rk V$ and $c_U := \rk E - \rk U.$
Using \autoref{proposition:generic-parabolic-global-generation}
applied to the bundles
$E_\star \otimes \omega_C(D)$ and
$E_\star^\vee \otimes \omega_C(D)$,
we know $\rk E \geq g c_V$ and $\rk E \geq g c_U$.

On the other hand, we claim $\rk U + \rk V \leq \rk E$.
Granting this claim, we find $c_V + c_U \geq \rk E$.
Since $\rk E \geq g c_V$ and $\rk E \geq g c_{U}$, adding these gives
\begin{align}
	\label{equation:equality-case}
	2 \rk E \geq g(c_V + c_U) \geq g \rk E,
\end{align}
implying $2 \geq g$.

In the case $g > 2$, it remains to show $\rk U + \rk V \leq \rk E$.
We will argue this using the fact that an isotropic subsheaf for a non-degenerate quadratic
form
on a vector bundle of rank $2\rk E$ has rank at most $\rk E$.
Indeed, consider the quadratic form $q_E$ on $E \otimes \omega_C(D) \oplus
E^\vee \otimes \omega_C$ associated to the nondegenerate bilinear form $B_E$ of
\eqref{equation:bilinear-pairing}:
\begin{align*}
	q_E : E \otimes \omega_C(D) \oplus
	E^\vee \otimes \omega_C &\to \omega_C^{\otimes 2}(D) \\
	(v,w) &\mapsto B_E(v,w).
\end{align*}
Any vector bundle subsheaf of $E \otimes \omega_C(D) \oplus E^\vee \otimes \omega_C$ 
isotropic for this quadratic form has rank at most $\rk E$, as may be verified
on the generic fiber using that isotropic subspaces of a rank $2 \rk E$
non-degenerate quadratic space have dimension at most $\rk E$.
Therefore, it is enough to
show $U \oplus V$ is killed under $q_E$.
Using $q_E(U \oplus V) = B_E(U \times V)$,
it is enough to show $B_E(U \times V) = 0$.
We have a commutative diagram
\begin{equation}
	\label{equation:}
	\begin{tikzcd} 
		(H^0(C, U) \otimes \mathscr O_C)\times  (H^0(C, V) \otimes \mathscr O_C) \ar {r} \ar {d}
		& H^0(C, \omega_C^{\otimes 2}(D)) \otimes \mathscr O_C \ar {d} \\
		U \times V \ar {r}{B_E} & \omega_C^{\otimes 2}(D)
\end{tikzcd}\end{equation}
where the top horizontal map vanishes by assumption.
Since the vertical maps are surjective, the bottom horizontal map satisfies $B_E(U \times
V) = 0$, as desired.

To conclude, we also rule out the case $g = 2$.
If $g = 2$, we must have equality in \eqref{equation:equality-case}, which
forces $\rk E = 2 c_U$.
This means we have equality in
\autoref{proposition:generic-parabolic-global-generation}(II),
which is proved in \cite[Proposition
6.3.6]{LL:geometric-local-systems}.
If equality holds, we also have equality in
\cite[Lemma 6.2.3]{LL:geometric-local-systems},
which means 
$\mu(\widehat{E}_0 \otimes \omega_C(D)) = 2g - 2$
(where we are taking the bundle named $V$ in 
\cite[Lemma 6.2.3]{LL:geometric-local-systems}
to be $\widehat{E}_0 \otimes \omega_C(D)$).
This means
$E_\star$ has trivial parabolic structure at each parabolic point,
so $\widehat{E}_0 \otimes
\omega_C(D) = E \otimes \omega_C$.
If we had $\rk E = 2 c_U$, we would have $H^0(C, E \otimes \omega_C) = H^0(C,
U)$, which means $h^0(C, U) = 2 \rk U$ and so $h^1(C, U) = \rk U$.
By Clifford's theorem for vector bundles, as in \cite[Lemma
6.2.3]{LL:geometric-local-systems},
this can only happen when $\mu(U) = 2g - 2$, in which case we would have $U = E
\otimes \omega_C$. This equality contradicts the assumption that $\rk E = 2 c_U$, i.e.
that $\rk E = 2 \rk U$.
\end{proof}

\subsection{}
\label{subsection:isotypic-proof}

We now deduce our desired isotypicity consequence, \autoref{theorem:isotypic} and \autoref{theorem:non-unitary}, 
for the Putman-Wieland conjecture.
With setup as in \autoref{conjecture:putman-wieland-intro},
we have a cover $\Sigma_{g',n'} \to \Sigma_{g,n}$ and 
an action of a finite index subgroup $\Gamma \subset \on{Mod}_{g,n+1}$
on $H^1(\Sigma_{g',n'}, \mathbb C)$. We aim to show that if $\rho$ is any
irreducible $H$ representation
so that every element of the
characteristic subspace
$H^1(\Sigma_{g'}, \mathbb C)^\rho \subset H^1(\Sigma_{g',n'}, \mathbb C)$ has
finite orbit under $\Gamma$,
then $g \leq 2$.

\begin{proof}[Proof of \autoref{theorem:isotypic} and \autoref{theorem:non-unitary}]
We first prove \autoref{theorem:non-unitary}.

	Assume to the contrary that the virtual action of  
the mapping class group on $H^1(\Sigma_{g'}, \mathbb
	C)^\rho$ is unitary. Let 
	$\mathscr{X}^\circ\xrightarrow{\widetilde{f}^\circ}
	\mathscr{C}^\circ\xrightarrow{\pi^\circ} \mathscr{M}$ be a versal
	family of $H$-covers, and $\mathscr{X}\to \mathscr{C}\to \mathscr{M}$ the associated families of proper curves. Let $m\in \mathscr{M}$ be a point, and $X\to Y$ the fiber over $\mathscr{M}$ 
	Let $E^\rho_\star$ be the parabolic bundle on $Y$ corresponding to the
	representation $\rho$ as in \autoref{notation:rep-to-vector-bundle}.
	
	We claim the map
	$\overline\nabla_m^\rho$ vanishes identically, as $W^1R^1\pi^\circ_*\rho$, the variation of Hodge structure on $\mathscr{M}$ associated to $H^1(X, \mathbb
	C)^\rho$, is unitary by assumption. Indeed, by
	\cite[1.13]{deligne1987theoreme}, we may write
	$$W^1R^1\pi^\circ_*\rho=\bigoplus_i \mathbb{V}_i\otimes W_i,$$ where the
	$\mathbb{V}_i$ are complex variations of Hodge structure with
	irreducible monodromy and the $W_i$ are constant variations. The
	$\mathbb{V}_i$ carry a unique structure of a $\mathbb{C}$-VHS, up to
	renumbering. As $W^1R^1\pi^\circ_*\rho$ is unitary by assumption, the
	same is true of each $\mathbb{V}_i$, and so the Hodge filtration of each
	$\mathbb V_i$ has length at most one. This proves the claim that
	$\overline\nabla_m^\rho = 0$.
	
Note that $\overline{\nabla}^\rho_m$ is the weight $1$ part of
the map adjoint to the multiplication map
\begin{align*}
	H^0(Y, E^\rho_0 \otimes \omega_Y(D)) \otimes H^0(Y, (E^\rho_0)^\vee
	\otimes \omega_Y) \to H^0(Y, \omega_Y^{\otimes 2}(D))
\end{align*}
	by \cite[Theorem 5.1.6]{landesmanL:canonical-representations}.
	Since the 
	subspace of $H^0(Y, E^\rho_0 \otimes \omega_Y(D))$ corresponding to $W^1\cap F^1$
	is $H^0(Y, \widehat{E}_0^\rho \otimes \omega_Y(D))$ by
	\autoref{lemma:weight-1-sections},
	we obtain that the multiplication map
	\begin{align*}
		H^0(Y, \widehat{E}_0^\rho \otimes \omega_Y(D)) \otimes H^0(Y, (E^\rho_0)^\vee
	\otimes \omega_Y) \to H^0(Y, \omega_Y^{\otimes 2}(D))
	\end{align*}
	also vanishes.
	Using \autoref{theorem:isotypic-derivative}, this implies $g \leq 2$.
	
	\autoref{theorem:isotypic} is immediate, as representations with finite image are unitary.
\end{proof}

As a consequence, we show how a claimed result from a paper of Boggi-Looijenga (which has since been retracted by the authors) implies the
Putman-Wieland conjecture.

\begin{corollary}
	\label{corollary:boggi-looijenga}
	Suppose \cite[Theorem
	B(i)]{boggiL:curves-with-prescribed-symmetry} were
	true.
	Then the Putman-Wieland conjecture,
	\autoref{conjecture:putman-wieland-intro}, would hold for all
	 $g \geq 3$.
\end{corollary}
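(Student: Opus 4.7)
The plan is to leverage Boggi-Looijenga's claimed endomorphism-algebra statement to upgrade any counterexample to a $\rho$-isotypic one, and then to derive a contradiction from \autoref{theorem:isotypic}. Suppose for contradiction that $f: X \to Y$ is a counterexample to Putman-Wieland with $g = g(Y) \geq 3$. Passing to the Galois closure $\tilde f: \tilde X \to Y$ preserves the counterexample property, since pullback gives an injection $H^1(X, \mathbb C) \hookrightarrow H^1(\tilde X, \mathbb C)$ equivariant for the intersection of the two finite-index subgroups of $\on{Mod}_{g,n+1}$ preserving $f$ and $\tilde f$. So we may assume $f$ is Galois with group $H$, and place it in a versal family $\mathscr X \to \mathscr C \to \mathscr M$ of $H$-covers, with $\tilde f$ appearing as a fiber over some $m \in \mathscr M$.

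The next step is to translate \cite[Theorem B(i)]{boggiL:curves-with-prescribed-symmetry} into a monodromy-irreducibility statement. Their claim is that the generic Jacobian has $\mathbb Q$-endomorphism algebra $\mathbb Q[H]$ (modulo the kernel of its $H^1$-action). Via the standard identification of this endomorphism algebra with the commutant of the $\pi_1(\mathscr M)$-monodromy on $H^1(\tilde X, \mathbb Q)$, the double commutant theorem implies that the monodromy algebra is the commutant of $\mathbb Q[H]$, and therefore acts on each multiplicity space $W_\rho := \on{Hom}_H(\rho, H^1(\tilde X, \mathbb Q))$ with commutant $D_\rho := \on{End}_H(\rho)$. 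In particular, each $W_\rho$ is $D_\rho$-irreducible as a monodromy representation, and this irreducibility persists on any finite \'etale cover of $\mathscr M$, since Zariski closures of monodromy are unaffected by passage to finite-index subgroups.

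Finally, let $V \subset H^1(\tilde X, \mathbb C)$ denote the subspace of vectors with finite orbit under some finite-index subgroup of $\on{Mod}_{g,n+1}$; this is nonzero by hypothesis, $H$-invariant, and $\on{Mod}$-invariant. Averaging over a Galois-stable collection of conjugates, $V$ descends to a nonzero $\on{Mod}$- and $H$-invariant $\mathbb Q$-subspace $V_\mathbb Q \subset H^1(\tilde X, \mathbb Q)$, and by the $D_\rho$-irreducibility above $V_\mathbb Q$ must contain all of the $\rho$-isotypic $\mathbb Q$-summand for some $\rho$. Complexifying and choosing a $\mathbb C$-irreducible factor $\rho'$ of $\rho \otimes_\mathbb Q \mathbb C$ realizes $\tilde f$ as a $\rho'$-isotypic counterexample in the sense of \autoref{definition:pw-counterexample}, and \autoref{theorem:isotypic} then forces $g \leq 1$, contradicting $g \geq 3$. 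The main obstacle is making the bridge between the $\mathbb Q$-endomorphism statement of Boggi-Looijenga and our $\mathbb C$-isotypic notion of counterexample fully rigorous, particularly tracking how $D_\rho$-irreducibility of the multiplicity spaces translates into $\mathbb C$-isotypicity after complexification and Galois descent; one also needs to be careful that the precise form of Boggi-Looijenga's statement is strong enough to identify the endomorphism algebra of the Jacobian of the generic fiber of our specific versal family with $\mathbb Q[H]$ modulo the relevant annihilator.
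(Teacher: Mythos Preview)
Your approach matches the paper's: upgrade any counterexample to a $\rho$-isotypic one using Boggi--Looijenga, then contradict \autoref{theorem:isotypic}. The paper's proof is shorter only because it invokes the isotypicity consequence of Theorem~B(i) directly rather than rederiving it from the endomorphism-algebra formulation via the double commutant theorem; the $\mathbb Q$-to-$\mathbb C$ concerns you flag are routine, since the finite-orbit subspace is Galois-stable hence defined over $\mathbb Q$, and the $\mathbb Q$-isotypic decomposition refines to the $\mathbb C$-isotypic one under base change.
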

\begin{remark}
	\label{remark:}
	We note that, unfortunately, there is a fatal error in the
	proof of \cite[Theorem
	B(i)]{boggiL:curves-with-prescribed-symmetry},
	appearing in \cite[Lemma 1.6]{boggiL:curves-with-prescribed-symmetry}
	(which is used in the proof of 
\cite[Theorem 1.1]{boggiL:curves-with-prescribed-symmetry},
and hence of
\cite[Theorem B(i)]{boggiL:curves-with-prescribed-symmetry}).
The error comes in the penultimate sentence of the proof of \cite[Lemma
1.6]{boggiL:curves-with-prescribed-symmetry},
where is it claimed
that ``It follows\dots,'' but in fact no argument is given for this claim. It is for this reason that the authors of \cite{boggiL:curves-with-prescribed-symmetry} retracted that paper.
\end{remark}

\begin{proof}
	Suppose $g \geq 3$, and we are
	given a finite \'etale $H$-cover of Riemann surfaces
	$f: \Sigma_{g',n'} \to \Sigma_{g,n}$ furnishing a counterexample to
	Putman-Wieland.
	This means some 
	subrepresentation $\chi \subset H^1(X, \mathbb
	C)$ has finite orbit under the action of the mapping class group
	$\on{Mod}_{g,n+1}$.
	The isotypicity statement of 	
	\cite[Theorem B(i)]{boggiL:curves-with-prescribed-symmetry}
	implies that if $\chi \subset H^1(X, \mathbb
	C)$ has finite orbit under the action of the mapping class group
	$\on{Mod}_{g,n+1}$, 
	every element of the $\chi$-isotypic component
	$H^1(X, \mathbb C)^\chi$ has finite orbit under the action of 
	$\on{Mod}_{g,n+1}$.	
	By definition, this means
	$f: \Sigma_{g',n'} \to \Sigma_{g,n}$ is $\chi$-isotypic
	in the sense of \autoref{definition:pw-counterexample},
	which contradicts \autoref{theorem:isotypic}.
\end{proof}

\section{Deducing a result of Markovi\'c-To\v{s}i\'c}
\label{section:markovic-tosic}

Our goal in this section is to show how our results toward the Putman-Wieland conjecture 
yield an alternate proof of \cite[Theorem 1.5]{markovic2}, which gives
constraints on any counterexample to Putman-Wieland.
Specifically, we will prove \autoref{theorem:markovicT}, which is slightly
stronger than \cite[Theorem 1.5]{markovic2}, as explained in
\autoref{remark:slight-generalization}.

We now briefly outline the structure of the proof.
We will deduce \autoref{theorem:markovicT} from \autoref{proposition:gonality}
using the Li-Yau inequality, following the idea in
\cite{markovic2}.
Hence, we focus on explaining
\autoref{proposition:gonality}, which shows that any counterexample to
Putman-Wieland has low gonality.
To verify this claim about gonality, we 
recall from \autoref{proposition:non-ggg}
that any counterexample to Putman-Wieland $f: X \to
Y$ yields a representation $\rho$ so that $(E^\rho)^\vee \otimes \omega_Y$ is
not generically globally generated.
With this in mind, we give some equivalent criteria for this non-generic global
generation in \autoref{lemma:prill-equivalence}.
Using \autoref{lemma:prill-equivalence}, we obtain cohomological bounds in
\autoref{lemma:prill-large-sections}, which then easily implies the desired
bound for gonality of \autoref{proposition:gonality}.

Hence, our first goal is to describe equivalent conditions for generic global
generation of $(E^\rho)^\vee \otimes \omega_Y$
in \autoref{lemma:prill-equivalence}. 
For this, it will be useful to reduce to the case $f: X
\to Y$ is Galois, in which the following decomposition of $f_* \mathscr O_X$ 
will be useful.
This result is well known to experts, and we include it for completeness.

\begin{lemma}
	\label{lemma:structure-decomposition}
	Let $f: X \to Y$ be a Galois $H$-cover, let $\rho$ be an irreducible
	$H$-representation, and let 
	$E^\rho_\star$ denote the parabolic bundle associated to $\rho$ as in
	\autoref{notation:rep-to-vector-bundle}, and let $E^\rho := E^\rho_0$.
	Then 
	$f_* \mathscr O_X \simeq \oplus_{\text{$H$-irreps $\rho$}} (E^\rho)^{\oplus \dim \rho}$.
\end{lemma}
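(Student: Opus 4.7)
The plan is to first establish the decomposition on the complement $Y^\circ := Y \setminus D$ of the branch locus of $f$, where $f$ restricts to a finite \'etale Galois $H$-cover $f^\circ : X^\circ \to Y^\circ$, and then to extend across the branch points using the Deligne canonical extension.

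On $Y^\circ$, the sheaf $f^\circ_* \mathscr O_{X^\circ}$ is a locally free $\mathscr O_{Y^\circ}$-module of rank $|H|$ carrying a horizontal $H$-action whose fiber at each point is the regular representation $\mathbb C[H]$, since $H$ acts freely and transitively on each fiber of $f^\circ$. Because $\mathbb C[H] \simeq \bigoplus_\rho \rho^{\oplus \dim \rho}$ as a left $H$-module, taking $H$-equivariant Hom gives an $H$-equivariant decomposition
$$f^\circ_* \mathscr O_{X^\circ} \simeq \bigoplus_\rho \rho \otimes W^\rho$$
of bundles with flat connection, where each $W^\rho$ has rank $\dim \rho$. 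A direct computation of the local monodromy (which acts on the fibers $\mathbb C[H]$ by right multiplication by $\phi(\gamma)^{-1}$) identifies $W^\rho$ with the flat bundle attached to $\rho^\vee \circ \phi$; reindexing the sum via $\rho \leftrightarrow \rho^\vee$ and forgetting the $H$-action yields $f^\circ_* \mathscr O_{X^\circ} \simeq \bigoplus_\rho (E^\rho|_{Y^\circ})^{\oplus \dim \rho}$, where $E^\rho|_{Y^\circ}$ is the flat bundle attached to $\rho$ as in \autoref{notation:rep-to-vector-bundle}.

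To extend across $D$, I claim that $f_* \mathscr O_X$ is the Deligne canonical extension of $f^\circ_* \mathscr O_{X^\circ}$; since canonical extension is compatible with direct sums and by construction $E^\rho$ is the Deligne canonical extension of the flat bundle attached to $\rho$, this immediately yields the desired decomposition $f_* \mathscr O_X \simeq \bigoplus_\rho (E^\rho)^{\oplus \dim \rho}$. The claim is a local computation near each branch point $y \in D$: with preimages $x_1, \ldots, x_k$ of ramification indices $e_1, \ldots, e_k$ and local uniformizers $z$ at $y$ and $w_i$ at $x_i$ satisfying $w_i^{e_i} = z$, the sections $\{w_i^j : 1 \leq i \leq k,\ 0 \leq j \leq e_i - 1\}$ furnish a local frame for $f_* \mathscr O_X$. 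The relation $dw_i = \tfrac{1}{e_i} \tfrac{dz}{z}\, w_i$ shows that the Gauss-Manin connection has a logarithmic pole at $z = 0$ with diagonal residue of entries $j/e_i \in [0, 1)$, which is precisely the defining condition for the Deligne canonical extension.

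The main technical content lies in verifying that $f_* \mathscr O_X$ realizes the Deligne canonical extension of its restriction to $Y^\circ$, which reduces to the local eigenvalue computation above; the remainder of the argument is formal representation theory combined with the functoriality of the canonical extension under direct sums.
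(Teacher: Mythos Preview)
Your proof is correct and follows essentially the same route as the paper: identify $f^\circ_*\mathscr{O}_{X^\circ}$ with the flat bundle attached to the regular representation on the unramified locus, then extend across the branch divisor by checking that $f_*\mathscr{O}_X$ is the Deligne canonical extension via a local residue computation. The only cosmetic difference is the order of operations: the paper first extends to obtain $f_*\mathscr{O}_X \simeq E^{\mathrm{reg}}_0$ (its Lemma~\ref{lemma:regular-rep}, where the local computation you spell out is left implicit) and then decomposes via Mehta--Seshadri, whereas you decompose on $Y^\circ$ first and then extend summand-by-summand; your explicit frame $\{w_i^j\}$ with residue eigenvalues $j/e_i \in [0,1)$ is exactly the content the paper defers to ``a local computation.''
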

\begin{proof}
In \autoref{lemma:regular-rep} below, we show 
$f_* \mathscr O_X \simeq E^{\on{reg}}_0$.
Therefore, it suffices to show $E^{\on{reg}}_0 \simeq \oplus_{\text{$H$-irreps $\rho$}} (E^\rho)^{\oplus \dim \rho}$.
Since the regular representation of $H$ decomposes as 
$\on{reg} \simeq \oplus_{\text{$H$-irreps $\rho$}} \rho^{\oplus \dim \rho}$,
the Mehta-Seshadri correspondence \cite{mehta1980moduli} gives the desired isomorphism
$E^{\on{reg}}_0 \simeq \oplus_{\text{$H$-irreps $\rho$}} (E^\rho)^{\oplus \dim \rho}$.
\end{proof}

\begin{lemma}
	\label{lemma:regular-rep}
	Let $f: X \to Y$ be a Galois $H$-cover.
Let $E^{\on{reg}}_\star$ denote the parabolic bundle on $Y$ associated to the
	regular $H$ representation under the Mehta-Seshadri correspondence.
	Then, $E^{\on{reg}}_0 \simeq f_* \mathscr O_X$.
\end{lemma}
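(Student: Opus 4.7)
The plan is to identify $f_*\mathscr{O}_X$ with the Deligne canonical extension of the flat bundle attached to the regular representation, and then observe that the residues of the natural log connection automatically lie in $[0,1)$, forcing the parabolic weights to match those of the regular representation under the Mehta-Seshadri correspondence.

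First, I would identify the local system on $Y^\circ := Y \setminus D$ corresponding to the regular representation $H \to \gl(\mathbb{C}[H])$. Since $f^\circ : X^\circ \to Y^\circ$ is an \'etale Galois $H$-cover, the sheaf $f^\circ_*\mathbb{C}_{X^\circ}$ is a local system of rank $|H|$ whose monodromy is precisely $\pi_1(Y^\circ) \to H \to \gl(\mathbb{C}[H])$; that is, it is the local system attached to the regular representation. Correspondingly, the associated flat holomorphic bundle on $Y^\circ$ is $(f^\circ_*\mathscr{O}_{X^\circ}, f^\circ_* d)$, with $d$ the exterior derivative on $\mathscr{O}_{X^\circ}$, which descends since $d$ commutes with the $H$-action.

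Next I would verify that $f_*\mathscr{O}_X$ is the Deligne canonical extension of $(f^\circ_*\mathscr{O}_{X^\circ}, f^\circ_*d)$ to $Y$, and that its parabolic structure is the one attached to the regular representation. This is a purely local question at a point $p \in D$; let $n$ be the ramification index of $f$ over $p$ (this is constant along preimages since $f$ is Galois). Choosing formal uniformizers $y$ at $p$ and $x$ at a preimage so that $y = x^n$, the stalk of $f_*\mathscr{O}_X$ at $p$ is $H$-equivariantly a sum of blocks of the form $\bigoplus_{k=0}^{n-1} \mathscr{O}_{Y,p}\cdot x^k$. Trivializing each summand $\mathscr{O}_{Y,p}\cdot x^k$ by the section $x^k$, the connection $d$ becomes $d + \tfrac{k}{n}\tfrac{dy}{y}$, which has a simple pole with residue $\tfrac{k}{n} \in [0,1)$. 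This shows simultaneously that $f_*\mathscr{O}_X$ is locally free on $Y$ extending $f^\circ_*\mathscr{O}_{X^\circ}$, that the induced connection has at worst log poles along $D$, and that the residue eigenvalues at each point of $D$ lie in $[0,1)$ and equal the weights prescribed to the regular representation under Mehta-Seshadri (namely $k/n$ on the eigenspace of the local monodromy eigenvalue $e^{2\pi i k/n}$).

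By the uniqueness of the Deligne canonical extension (and its parabolic refinement), it follows that $(f_*\mathscr{O}_X, \{\text{filtration by residue eigenspaces}\}, \{k/n\})$ is precisely $E^{\on{reg}}_\star$, so in particular $E^{\on{reg}}_0 \simeq f_*\mathscr{O}_X$ as claimed. The only real content is the local connection computation at the branch points; once the residues $k/n$ have been pinned down, matching them against the local monodromy of the regular representation (which is the standard permutation action of $\mathbb{Z}/n$ on the $n$-th roots of unity in its decomposition into characters) is routine.
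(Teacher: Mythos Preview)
Your proof is correct and takes essentially the same approach as the paper: identify the bundles over the unramified locus $Y^\circ$, then reduce to a local computation at the branch points. The paper frames this as constructing a map via adjunction $E^{\on{reg}}_0 \to j_*j^*E^{\on{reg}}_0 \simeq f_*\mathscr{O}_X$ and declaring it an isomorphism by an unspecified local check, whereas you carry out that check explicitly---computing the residue eigenvalues $k/n \in [0,1)$ of the log connection on $f_*\mathscr{O}_X$ and invoking uniqueness of the Deligne canonical extension---so your argument is in fact a more detailed version of the same proof.
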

\begin{proof}
	Let $Y^\circ \subset Y$ be the locus on which $f$ is
\'etale, let $f^\circ : X^\circ := f^{-1}(Y^\circ) \to Y^\circ$ be the restriction of $f$ to
$Y^\circ$,
and let $j: Y^\circ \to Y$ be the inclusion. 
We find
\begin{align*}
	E^{\on{reg}_{Y^\circ}} \simeq E^{f^\circ_* \on{triv}_{X^{\circ}}} \simeq
	f^\circ_* E^{\on{triv}_{X^\circ}} \simeq f^\circ_* \mathscr O_{X^\circ}.
\end{align*}
By adjunction, this gives us a map
\begin{align*}
	E^{\on{reg}_{Y}} \to j_* j^* E^{\on{reg}_{Y}} \simeq j_* E^{\on{reg}_{Y^\circ}}
	\simeq j_* f^\circ_* \mathscr O_{X^{\circ}} \simeq f_* \mathscr O_X.
\end{align*}
Since this is an isomorphism over $Y^\circ$, it is enough to verify it is an
isomorphism in a neighborhood of each point of $Y - Y^\circ$, which follows from a local
computation.
\end{proof}

Using the above decomposition in the Galois case, we now give several
reformulations of non-generic global generation of $E^\rho \otimes \omega_Y$.
For the first, we recall a definition of Prill exceptional covers, named after
their relation to Prill's problem, as discussed in \cite{landesmanL:prill}, see
also \autoref{remark:prill-solution} and \autoref{remark:companion-paper}.

\begin{definition}
	\label{definition:}
	A finite cover $f: X \to Y$ of smooth proper geometrically connected
curves is {\em Prill exceptional} if 
$h^0(X, \mathscr O_X(f^{-1}(y))) \geq 2$ for every point $y \in Y$.
\end{definition}

\begin{lemma}
	\label{lemma:prill-equivalence}
	Let $f: X \to Y$ be a finite cover of smooth proper connected curves
	whose Galois closure has Galois group $H$.
	The following are equivalent:
	\begin{enumerate}
		\item The map $f$ is Prill exceptional.
		\item There is some irreducible nontrivial $H$-representation $\rho$ for which the
	associated vector bundle $E^\rho := E^\rho_0$ 
	as in \autoref{notation:rep-to-vector-bundle} is a summand of $f_*
	\mathscr O_X$ and 
	$h^0(Y, E^\rho(p)) > 0$	for a general $p \in Y$.
\item For the same $\rho$ as in the previous part, $(E^\rho)^\vee \otimes \omega_Y$ is not generically globally generated.
	\end{enumerate}
	\end{lemma}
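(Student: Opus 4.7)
The plan is to prove $(1)\Leftrightarrow(2)$ and $(2)\Leftrightarrow(3)$ separately. Both rest on an $H$-equivariant refinement of \autoref{lemma:structure-decomposition}: letting $\widetilde{f}:\widetilde{X}\to Y$ denote the Galois closure of $f$ with group $H$, and $K\subset H$ the subgroup satisfying $\widetilde{X}/K = X$, one has $f_*\mathscr{O}_X = (\widetilde{f}_*\mathscr{O}_{\widetilde{X}})^K$, and taking $K$-invariants of the $H$-equivariant version of \autoref{lemma:structure-decomposition} gives
\begin{align*}
	f_*\mathscr{O}_X \simeq \bigoplus_\rho E^\rho\otimes (\rho^\vee)^K.
\end{align*}
The trivial representation contributes the summand $\mathscr{O}_Y$, and the $E^\rho$ appearing with nonzero multiplicity are exactly those $\rho$ occurring in the permutation representation $\mathbb{C}[H/K]$.

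For $(1)\Leftrightarrow(2)$, the plan is to combine the projection formula $h^0(X,\mathscr{O}_X(f^{-1}(y))) = h^0(Y, f_*\mathscr{O}_X(y))$ with this decomposition. Assuming $g(Y)\geq 1$, so that $h^0(Y,\mathscr{O}_Y(y))=1$ for every $y$, the Prill exceptional condition becomes the requirement that for every $y\in Y$, some nontrivial $\rho$ with $E^\rho$ a summand of $f_*\mathscr{O}_X$ satisfies $h^0(Y, E^\rho(y))>0$. Each locus $S_\rho := \{y\in Y : h^0(Y, E^\rho(y))>0\}$ is closed by upper semicontinuity and only finitely many $\rho$ contribute, so irreducibility of $Y$ forces some individual $S_\rho$ to equal $Y$, producing the single $\rho$ asserted by (2); the reverse implication follows immediately from the same semicontinuity. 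The edge case $g(Y)=0$, in which (1) is automatic, requires a brief separate discussion.

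For $(2)\Leftrightarrow(3)$, the key input is the vanishing $H^0(Y, E^\rho)=0$ for every nontrivial irreducible $\rho$: parabolic stability of $E^\rho_\star$ of parabolic degree zero forces any nonzero section to produce a rank one parabolic subsheaf of nonnegative parabolic degree, contradicting stability unless $\rk E^\rho = 1$, and that rank one case is excluded by observing that the associated line bundle must be of negative degree or a nontrivial degree zero line bundle. Granting this vanishing, the long exact sequence for
\begin{align*}
	0\to E^\rho\to E^\rho(p)\to E^\rho|_p\to 0
\end{align*}
identifies $h^0(Y, E^\rho(p))>0$ with the failure of injectivity of the coboundary $E^\rho|_p \to H^1(Y, E^\rho)$; Serre duality then translates this into the failure of surjectivity of the evaluation map $H^0(Y, (E^\rho)^\vee\otimes \omega_Y)\to (E^\rho)^\vee\otimes \omega_Y|_p$, which for general $p$ is precisely the failure of $(E^\rho)^\vee\otimes \omega_Y$ to be generically globally generated.

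The main obstacle I anticipate is the $H$-equivariant upgrade of \autoref{lemma:structure-decomposition}: the statement there records only the underlying bundle, and one must revisit the Mehta-Seshadri correspondence to match the isotypic decomposition of $f_*\mathscr{O}_X$ with the correct multiplicity spaces $(\rho^\vee)^K$. Once this bookkeeping is in place, the remainder of the argument reduces to routine applications of Serre duality, upper semicontinuity, and parabolic stability.
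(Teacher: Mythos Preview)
Your proposal is correct and follows essentially the same route as the paper. Both arguments decompose $f_*\mathscr O_X$ into summands $E^\rho$ (you compute the multiplicities as $\dim(\rho^\vee)^K$, the paper just writes unspecified $a_\rho$), use the projection formula for $(1)\Leftrightarrow(2)$, and use Serre duality together with the vanishing $H^0(Y,E^\rho)=0$ for nontrivial $\rho$ for $(2)\Leftrightarrow(3)$. Your treatment of $(2)\Leftrightarrow(3)$ is the Serre-dual of the paper's: you run the long exact sequence for $0\to E^\rho\to E^\rho(p)\to E^\rho|_p\to 0$ and dualize, whereas the paper runs it for $0\to F^\rho(-p)\to F^\rho\to F^\rho|_p\to 0$ with $F^\rho=(E^\rho)^\vee\otimes\omega_Y$; these are equivalent.

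You are more explicit than the paper in two places. First, in $(1)\Rightarrow(2)$ you spell out the upper-semicontinuity and irreducibility argument needed to pass from ``for every $p$ some nontrivial $\rho$ contributes'' to ``some fixed $\rho$ works for general $p$''; the paper asserts this equivalence without comment. Second, you give a direct stability argument for $H^0(Y,E^\rho)=0$, while the paper simply states it (implicitly via $H^0(\widetilde X,\mathscr O_{\widetilde X})=\mathbb C$ being the trivial $H$-representation, which is a shorter route you could also take). Your flagging of the $g(Y)=0$ edge case is a reasonable point of care, though in the paper's applications $g\geq 2$ throughout.
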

\begin{proof}
	We first show the equivalence of $(1)$ and $(2)$.
	In the case $f$ is a Galois cover, we can apply
the decomposition 
$f_* \mathscr O_X \simeq \oplus_{\text{$H$-irreps $\rho$}} (E^\rho)^{\oplus \dim \rho}$
from \autoref{lemma:structure-decomposition}.
We find 
\begin{align*}
	h^0(X, \mathscr O_X(f^{-1}(p)) = h^0(Y, (f_* \mathscr O_X)(p))
		=\oplus_{\text{$H$-irreps $\rho$}} h^0(Y, (E^\rho)^{\oplus \dim
		\rho}(p)).
\end{align*}
Therefore, in the Galois case, $f$ is Prill exceptional if and only if there is some
$E^\rho$, a summand of $f_* \mathscr O_X$, with 
$h^0(Y, E^\rho(p)) > 0$ for a general $p \in Y$.

We next verify the equivalence of $(1)$ and $(2)$ in the case that $f$ is not
Galois.
Let $X'\xrightarrow{h} X \xrightarrow{f} Y$ denote its Galois closure, so that $f \circ h$ has
	Galois group $H$.
Since we are working in characteristic $0$, 
$\mathscr O_X$ is a
summand of $h_* \mathscr O_{X'}$ and therefore $f_* \mathscr O_X$ shows
up as a summand in $(f \circ h)_* \mathscr O_{X'}$.
Therefore, we can decompose $f_* \mathscr O_X$ as a sum 
$f_* \mathscr O_X \simeq \oplus_{\text{$H$-irreps $\rho$}} 
(E^\rho)^{\oplus a_\rho}$ where $0 \leq a_\rho \leq \dim \rho$.
As in the Galois case, we again find 
$f$ is Prill exceptional if and only if there is some
$E^\rho$ with 
$h^0(Y, E^\rho(p)) > 0$ for a general $p \in Y$.

We conclude by demonstrating the equivalence of $(2)$ and $(3)$.
Let $F^\rho := (E^\rho)^\vee \otimes\omega_Y$.
We wish to show $F^\rho$ is not generically globally generated if and only
if $h^0(Y, E^\rho(p)) > 0$.
Note that $F^\rho$
is not generically globally generated if and only if, for a general $p \in Y$, we have an exact sequence
\begin{equation}
	\label{equation:}
	\begin{tikzcd}
		0 \ar {r} & H^0(Y,F^\rho(-p)) \ar {r} & H^0(Y, F^\rho) \ar {r} &
		H^0(Y, F^\rho|_p)
\end{tikzcd}\end{equation}
which is not right exact.
Since $F^\rho|_p$ is supported at $p$, $h^0(Y, F^\rho|_p) = \rk F^\rho$,
and so failure of right exactness is equivalent to
$h^0(Y, F^\rho(-p)) >h^0(Y, F^\rho) - \rk F^\rho$.
Note that because $\rho$ is nontrivial, $0 = h^0(E^\rho) = h^1(Y, F^\rho).$
Therefore, using Riemann-Roch, 
$h^0(Y, F^\rho(-p)) >h^0(Y, F^\rho) - \rk F^\rho$
is equivalent to 
$h^1(Y, F^\rho(-p))>  h^1(Y, F^\rho) = 0$.
By Serre duality, this is equivalent to
$h^0(Y, E^\rho(p)) > 0$, as we wanted to show.
\end{proof}

Combining 
\autoref{proposition:non-ggg} and 
\autoref{lemma:prill-equivalence}, we easily deduce that counterexamples to
Putman-Wieland yield Prill exceptional covers.
\begin{lemma}
	\label{lemma:pw-gives-prill}
	If $X$ has genus $g'$, $Y$ has genus $g$, and $f: X \to Y$ is a cover
	furnishing a counterexample to Putman-Wieland, then
	$f: X \to Y$ is Prill exceptional.
\end{lemma}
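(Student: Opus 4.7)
The plan is to combine \autoref{proposition:non-ggg} (which produces, for any counterexample to Putman-Wieland, an irreducible $H$-representation $\rho$ for which $(E^\rho)^\vee \otimes \omega_Y$ is not generically globally generated) with \autoref{lemma:prill-equivalence} (which characterizes Prill exceptionality in terms of such a $\rho$ that is additionally a summand of $f_* \mathscr O_X$). The main subtlety is ensuring the $\rho$ we produce is a summand of $f_* \mathscr O_X$, not merely of $\widetilde f_* \mathscr O_{\widetilde X}$, where $\widetilde f : \widetilde X \to Y$ denotes the Galois closure of $f$.

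First, I would pass to the Galois closure $\widetilde f$, with Galois group $H$, and set $K := \mathrm{Gal}(\widetilde X/X) \leq H$. The pullback $H^1(X, \mathbb C) \hookrightarrow H^1(\widetilde X, \mathbb C)$ is injective with image the $K$-invariant subspace, so a nonzero $\Gamma$-fixed vector $v \in H^1(X, \mathbb C)$ furnished by the Putman-Wieland counterexample lifts to a nonzero $K$-invariant, $\widetilde \Gamma$-fixed vector $\widetilde v \in H^1(\widetilde X, \mathbb C)$, where $\widetilde \Gamma$ is the finite-index subgroup stabilizing $\widetilde f$. Hence $\widetilde f$ also furnishes a counterexample, and applying \autoref{proposition:non-ggg} to $\widetilde f$ produces the representation $\rho$. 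Tracing through the proof of \autoref{lemma:period-map-kernel}, one may choose $\rho$ so that its isotypic component of $W^1 H^1(\widetilde X, \mathbb C)$ contains $\widetilde v$; since $\widetilde v$ is $K$-invariant and the $\rho$-isotypic component meets the $K$-invariants nontrivially only when $\rho^K \neq 0$, this forces $\rho^K \neq 0$. By Frobenius reciprocity, $\rho^K \neq 0$ is equivalent to $\rho$ appearing in $\mathrm{Ind}_K^H \mathbb C$, and since the $H$-representation corresponding to $f_* \mathscr O_X$ is exactly $\mathrm{Ind}_K^H \mathbb C$ (compare \autoref{lemma:structure-decomposition}), we conclude $E^\rho$ is a summand of $f_* \mathscr O_X$.

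With this $\rho$, both the summand condition and the non-generic-global-generation condition hold, so conditions (2) and (3) of \autoref{lemma:prill-equivalence} are jointly satisfied for $f$, and the equivalence (1)$\Leftrightarrow$(2) yields Prill exceptionality of $f$. The principal obstacle is the representation-theoretic bookkeeping needed to pin $\rho$ down as a summand of $f_* \mathscr O_X$ rather than merely of $\widetilde f_* \mathscr O_{\widetilde X}$; the key observation resolving it is that Putman-Wieland counterexamples yield vectors already coming from $H^1(X, \mathbb C)$, which become $K$-invariant upon pullback and hence distinguish precisely the $\rho$'s with $\rho^K \neq 0$.
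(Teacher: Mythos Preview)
Your approach coincides with the paper's in the Galois case: apply \autoref{proposition:non-ggg} to obtain an irreducible $\rho$ with $(E^\rho)^\vee \otimes \omega_Y$ not generically globally generated, then invoke \autoref{lemma:prill-equivalence}. The paper's proof in fact only explicitly treats the Galois case (it opens with ``Suppose $f: X \to Y$ is a Galois $H$-cover\ldots''), so your additional care for non-Galois $f$---passing to the Galois closure $\widetilde f$ with group $H$ and subgroup $K = \mathrm{Gal}(\widetilde X/X)$, and arguing via $K$-invariance and Frobenius reciprocity that the relevant $\rho$ satisfies $\rho^K \neq 0$ and hence appears as a summand of $f_* \mathscr O_X$---is genuinely needed and fills a gap the paper leaves implicit. (Note that Prill exceptionality of $\widetilde f$ does not formally imply Prill exceptionality of $f$, so one cannot simply reduce to the Galois case without this bookkeeping.)

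One small imprecision: you write that $\rho$ may be chosen so its isotypic component ``contains $\widetilde v$,'' but $\widetilde v$ need not lie in a single isotypic piece. The fix is routine: project $\widetilde v$ onto the $H$-isotypic components (the projections commute with the $K$-action and preserve $W^1$), choose $\rho$ with nonzero projection, so that $\rho^K \neq 0$ and $W^1 V^\rho \neq 0$; then Hodge theory gives $(F^1 \cap W^1) V^\rho \neq 0$, which is what is actually needed to produce a nonzero element of $\ker \overline{\nabla}^\rho_m$ as in the proof of \autoref{proposition:non-ggg}. With this adjustment your argument is complete and in fact more careful than the paper's.
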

\begin{proof}
	Suppose $f: X \to Y$ is a Galois $H$-cover which
	furnishes a counterexample to Putman-Wieland.
	We saw in \autoref{proposition:non-ggg} that
	there is some irreducible $H$-representation $\rho$ so that
	$(E^\rho)^\vee \otimes \omega$ is not generically globally generated.
	By \autoref{lemma:prill-equivalence}, $f$ is Prill exceptional.
\end{proof}

The following remarks will not be needed in what follows.
We include them as a pleasant, immediate application of the work we
have done so far.

\begin{remark}
	\label{remark:prill-solution}
	Prill's problem, \cite[p. 268, Chapter VI, Exercise D]{ACGH:I},
	asks whether any curve of genus $g \geq 2$ has a Prill exceptional
	cover.
	By \autoref{lemma:pw-gives-prill}, any counterexample to Putman-Wieland
	yields a Prill exceptional cover. 
	Since 
\cite[Theorem 1.3]{markovic} gives $f: X \to Y$ which is a 
counterexample to Putman-Wieland in genus $2$, we find that a general
	curve of genus $2$ has a Prill exceptional cover.

\end{remark}
\begin{remark}
	\label{remark:companion-paper}
	In a companion paper, \cite{landesmanL:prill},
	we prove a stronger result than that mentioned in 
	\autoref{remark:prill-solution}.
	Namely, we show that {\em any} smooth proper connected
	curve of genus $2$ over the complex numbers has a Prill
	exceptional cover, as opposed to just a {\em general} curve of genus $2$.
	We include \autoref{remark:prill-solution} in the present paper as it is
	an immediate consequence of the work we do here to obtain an alternate
	proof of the
	result of
	Markovi\'c-To\v{s}i\'c.
	However, we chose to write \cite{landesmanL:prill} as a separate paper
	in order to give a mostly self-contained exposition of Prill's problem, 
	which is easier to follow than the above
	construction of Prill exceptional covers. 
	In particular, it is not mired in the
	notation of parabolic bundles.
\end{remark}

Another consequence of the above results is the following bound on sections 
associated to the divisor given by a fiber of $f$, which will be the key to
verifying a gonality estimate to deduce the result of Markovi\'c-To\v{s}i\'c,
\autoref{theorem:markovicT}.

\begin{lemma}
	\label{lemma:prill-large-sections}
	Any 
	Prill exceptional cover $f: X \to Y$ 
satisfies 
$h^0(X, \mathscr O_X(f^{-1}(p)))\geq 2$.
If $f$ is Galois, we moreover have
$h^0(X, \mathscr O_X(f^{-1}(p)))\geq g + 1$.
\end{lemma}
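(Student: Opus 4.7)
The first inequality holds by the very definition of a Prill exceptional cover, so the entire content of the lemma lies in the Galois case, which is what my plan addresses.

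The main idea is to combine the isotypic decomposition of \autoref{lemma:structure-decomposition} with the rank estimate of \autoref{proposition:generic-parabolic-global-generation}. First, I would apply \autoref{lemma:structure-decomposition} and the projection formula to write
\[
h^0(X, \mathscr O_X(f^{-1}(p))) = h^0(Y, (f_* \mathscr O_X)(p)) = \sum_{\text{irreps } \sigma} \dim \sigma \cdot h^0(Y, E^\sigma(p)).
\]
The trivial representation contributes at least $1$ whenever the genus $g$ of $Y$ is at least $1$ (the $g = 0$ case of the lemma is trivial, and $g = 1$ reduces to the first half). So the target becomes producing a single nontrivial irreducible $H$-representation $\rho$ with $h^0(Y, E^\rho(p)) \geq 1$ for every $p$ and with $\dim \rho \geq g$.

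For the existence of such a $\rho$, my plan is to invoke \autoref{lemma:prill-equivalence}. The equivalence $(1) \Leftrightarrow (2)$ produces a nontrivial $\rho$ with $h^0(Y, E^\rho(p)) > 0$ for general $p \in Y$, and upper semicontinuity of $h^0$ upgrades this to every $p$. The equivalence $(2) \Leftrightarrow (3)$ then gives that $(E^\rho)^\vee \otimes \omega_Y$ is not generically globally generated.

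Finally, to extract the lower bound $\dim \rho \geq g$, I plan to apply the ``in particular'' clause of \autoref{proposition:generic-parabolic-global-generation} to the parabolic bundle $F_\star := (E^\rho_\star)^\vee \otimes \omega_Y(D)$, where $D$ is the branch divisor of $f$. Since $E^\rho_\star$ has parabolic degree zero and is stable by Mehta--Seshadri, $F_\star$ is stable of parabolic slope $2g - 2 + \deg D$, satisfying the hypotheses of the proposition. The identification
\[
\widehat{((E^\rho_\star)^\vee \otimes \omega_Y(D))}_0 \simeq (E^\rho)^\vee \otimes \omega_Y,
\]
already used in the proof of \autoref{theorem:isotypic-derivative}, shows that $\widehat{F_\star}_0$ fails to be generically globally generated, and the proposition then yields $\dim \rho = \rk F \geq g$. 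Summing the isotypic contributions in the display above gives $h^0(X, \mathscr O_X(f^{-1}(p))) \geq 1 + \dim \rho \geq g + 1$. The most delicate step is verifying the identification of $\widehat{F_\star}_0$ with $(E^\rho)^\vee \otimes \omega_Y$, but this is a direct unwinding of the definitions of the coparabolic operation and the dual parabolic structure; the remaining steps are straightforward bookkeeping.
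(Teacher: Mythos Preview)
Your proposal is correct and follows essentially the same route as the paper: decompose $f_*\mathscr O_X$ via \autoref{lemma:structure-decomposition}, use \autoref{lemma:prill-equivalence} to find a nontrivial $\rho$ with $(E^\rho)^\vee\otimes\omega_Y$ not generically globally generated, and then feed the parabolic bundle $E^{\rho^\vee}_\star\otimes\omega_Y(D)$ (which is your $F_\star$, since $(E^\rho_\star)^\vee=E^{\rho^\vee}_\star$ under Mehta--Seshadri) into the final clause of \autoref{proposition:generic-parabolic-global-generation} to get $\dim\rho\geq g$. The paper cites \cite[(3.1)]{yokogawa:infinitesimal-deformation} for the identification $\widehat{E}^{\rho^\vee}_0\otimes\omega_Y(D)\simeq (E^\rho_0)^\vee\otimes\omega_Y$, whereas you point to the same fact as used in the proof of \autoref{theorem:isotypic-derivative}; either reference works.
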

\begin{proof}
	The non-Galois case follows from the definition of a Prill exceptional
	cover.

	We next show, in the case that $f$ is Galois, that
$\dim \rho \geq g$.
Let $D$ denote the branch divisor of $f$, and let $n := \deg D$.
We wish to apply
\autoref{proposition:generic-parabolic-global-generation}
to the bundle
$E_\star = E^{\rho^\vee}_\star \otimes \omega_Y(D)$
and so we next verify its hypotheses.
If $f$ is Galois,
$(E^\rho)^\vee \otimes\omega_Y$ is not generically
globally generated by
\autoref{lemma:prill-equivalence}. 
By 
\cite[(3.1)]{yokogawa:infinitesimal-deformation} we have $\widehat{E}^{\rho^\vee}_0(D)
\simeq (E^{\rho}_0)^\vee$.
This implies $\widehat{E}^{\rho^\vee}_0 \otimes \omega_Y(D)
\simeq (E^{\rho}_0)^\vee \otimes \omega_Y$.
Hence, the bundle $\widehat{E}^{\rho^\vee}_\star \otimes \omega_Y(D)$
is not generically globally generated.
We also have 
\begin{align*}
	\mu_\star(E^{\rho^\vee}_\star \otimes \omega_Y(D)) =
	\mu_\star(E^{\rho^\vee}_\star) + \deg \omega_Y(D) = 0 + 2g-2 + n = 2g -
	2+ n.
\end{align*}
Therefore, we may apply the final statement of
\autoref{proposition:generic-parabolic-global-generation}
to the parabolic bundle $E_\star = E^{\rho^\vee}_\star \otimes \omega_Y(D)$
to conclude $\dim \rho \geq g$.

Combining the above observations, we have
\begin{align*}
h^0(X, \mathscr O_X(f^{-1}(p))) &= h^0(Y, (f_* \mathscr O_X)(p)) \\
	&\geq h^0(Y, E^{\on{triv}}_0(p) \oplus (E^\rho_0)^{\oplus \dim \rho}(p)) \\
	&\geq h^0(Y, E^{\on{triv}}_0(p) \oplus (E^\rho_0)^{\oplus g}(p)) \\
	&\geq 1 + g.\qedhere
\end{align*}
\end{proof}

\subsection{}
\label{proof:gonality}
We next prove \autoref{proposition:gonality}, regarding the gonality of
counterexamples to Putman-Wieland.
\begin{proof}[Proof of \autoref{proposition:gonality}]
	We first want to show that if $f: X \to Y$ furnishes a counterexample to Putman-Wieland, then
$\on{gon}(X) \leq \deg f$.
In the case $f: X \to Y$ is Galois,
we wish to show $X$ has gonality at most $\deg f - (g-1)$.

Note first that $f: X \to Y$ is Prill exceptional by
\autoref{lemma:pw-gives-prill}.
Hence, we may apply \autoref{lemma:prill-large-sections}, which implies that for a general point $p \in Y$, we have $h^0(X,
	\mathscr O(f^{-1}(p))) \geq 2$, and moreover 
$h^0(X,	\mathscr O(f^{-1}(p))) \geq g+1$
	when $f$ is Galois.
In the general case, $\mathscr O(f^{-1}(p))$ is a line bundle of degree $\deg f$
with $2$ global sections, meaning $X$ has gonality at most $\deg f$.

We now assume $f$ is Galois and aim to show
$\on{gon}(X) \leq \deg f - (g-1)$.
Choose a generic degree $g-1$ divisor $S$ on $X$. Then 
\begin{align*}
h^0(X, \mathscr O_X(f^{-1}(p)-S)) \geq 
h^0(X, \mathscr O_X(f^{-1}(p))) - \deg S
=
1 + g -(g-1) = 2,
\end{align*}
and so $f^{-1}(p)-S$ is a divisor of degree $\deg f - (g-1)$ which still has a
$2$-dimensional space of global sections. Therefore, $X$ has gonality at most $\deg
f - (g-1)$, as claimed.
\end{proof}

To conclude the paper, we deduce \autoref{theorem:markovicT} from
\autoref{proposition:gonality}.
For this, we will connect nonzero eigenvalues of the Laplacian of a curve
$X$ to the gonality of $X$, 
following the idea of \cite{markovic2}.
The key input in \cite[Theorem 1.5]{markovic2} is their verification in the end
of \cite[\S8]{markovic2} that if $f: X \to Y$ furnishes a counterexample to
Putman-Wieland, then the gonality of $X$ is at most $\deg f$.
We prove prove a slightly stronger statement, using 
different methods.
Following the idea of \cite{markovic2}, we now explain why the gonality estimate
of 
\autoref{proposition:gonality} implies \autoref{theorem:markovicT}, using the
Li-Yau inequality. 

\subsection{}
\label{proof-mt}
\begin{proof}[Proof of \autoref{theorem:markovicT}]
Let $X$ be a curve of genus $g'$, $Y$  a curve of genus $g$ and $f:
X \to Y$ a cover furnishing a counterexample to Putman-Wieland.
Recall we are trying to show that if 
$\lambda_1(X)$ denotes the smallest nonzero eigenvalue of the Laplacian acting on
$X$, then $\frac{1}{g-1} \geq 2 \lambda_1(X)$, and moreover there is a strict
inequality when $f$ is Galois.

It follows from the Li-Yau inequality \cite[Theorem
	1]{liY:a-new-conformal-invariant}, 
	as explained in \cite[(11)]{ellenbergHK:expander-graphs},
	that $\on{gon}(X) \geq 2\lambda_1(X)(g' - 1)$.
The statement of
\autoref{proposition:gonality}
tells us that any counterexample to Putman-Wieland, $f: X \to Y$,
satisfies
$\deg f \geq \on{gon}(X)$.
Combining this with the above consequence of the Li-Yau inequality and
Riemann-Hurwitz yields
\begin{align*}
	\deg f \geq \on{gon}(X)
	\geq 2 \lambda_1(g'-1)
	\geq 2 \lambda_1(X) \deg f (g-1).
\end{align*}
Dividing both sides by $\deg f(g-1)$ gives
$\frac{1}{g-1} \geq 2 \lambda_1(X)$.

In the case $f$ is Galois, since $g \geq 2$, we get a strict inequality 
$\deg f > \on{gon}(X)$, 
from \autoref{proposition:gonality},
which similarly implies 
$\frac{1}{g-1} > 2 \lambda_1(X)$.
\end{proof}

\bibliographystyle{alpha}
\bibliography{bibliography-mcg-hodge-theory}

\end{document}